\newcommand{\R}{\mathds R}
\newcommand{\dd}{\mathrm d}
\newcommand{\EP}{$c_0$EP}
\newcommand{\DA}{\mathrm{DA}}
\newcommand{\upstar}[1]{{#1}^{\raise1pt\hbox{$\scriptscriptstyle*$}}}
\newcommand{\chilow}[1]{\chi_{\lower2pt\hbox{$\scriptstyle#1$}}}
\DeclareMathOperator{\Ker}{Ker}
\DeclareMathOperator{\BV}{BV}
\DeclareMathOperator{\NBV}{NBV}
\DeclareMathOperator{\diam}{diam}
\DeclareMathOperator{\supp}{supp}
\title{Compact lines and the Sobczyk property}
\author{Claudia Correa}
\thanks{The first author is sponsored by FAPESP (Process no.\ 2012/25171-0).}
\address{Departamento de Matem\'atica,\hfill\break\indent Universidade de S\~ao Paulo, Brazil}
\email{claudiac.mat@gmail.com}
\author{Daniel V. Tausk}
\address{Departamento de Matem\'atica,\hfill\break\indent Universidade de S\~ao Paulo, Brazil}
\email{tausk@ime.usp.br} \urladdr{http://www.ime.usp.br/\~{}tausk}
\subjclass[2010]{46B20,46E15,54F05}
\keywords{Banach spaces of continuous functions; Sobczyk property; extensions of bounded operators; compact lines}
\date{February 9th, 2014}
\begin{document}

\theoremstyle{plain}\newtheorem{teo}{Theorem}[section]
\theoremstyle{plain}\newtheorem{prop}[teo]{Proposition}
\theoremstyle{plain}\newtheorem{lem}[teo]{Lemma}
\theoremstyle{plain}\newtheorem{cor}[teo]{Corollary}
\theoremstyle{definition}\newtheorem{defin}[teo]{Definition}
\theoremstyle{remark}\newtheorem{rem}[teo]{Remark}
\theoremstyle{plain} \newtheorem{assum}[teo]{Assumption}
\theoremstyle{definition}\newtheorem{example}[teo]{Example}

\begin{abstract}
We show that Sobczyk's Theorem holds for a new class of Banach spaces, namely spaces of continuous functions on linearly ordered compacta.
\end{abstract}

\maketitle

\begin{section}{Introduction}

The main result of this paper (Corollary~\ref{thm:obaobaSobczyk}) states that, for every compact line $K$, the Banach space $C(K)$ of continuous real-valued functions on $K$ (endowed with the supremum norm) has the {\em Sobczyk property}, i.e., every isomorphic copy of $c_0$ in $C(K)$ is complemented. By a {\em compact line\/} we mean a linearly ordered set which is compact in the order topology (see \cite[1.7.4]{Engelking} for the definition
and basic facts about the order topology
and \cite[3.12.3]{Engelking} for the characterization of linear orders yielding a compact topology). Topological properties of compact lines and structural properties of their spaces of continuous functions have recently been studied in a series of articles \cite{CT, KK, KK2, Kubis}.

In his celebrated theorem \cite{Sobczyk}, Sobczyk has proven that every separable Banach space has the property now named after him. Such result follows from the fact that $c_0$ is {\em separably injective}, i.e., given a separable
Banach space $X$ and a closed subspace $Y$ of $X$, every bounded operator $T:Y\to c_0$ admits a bounded extension to $X$. When the latter
condition holds for a closed subspace $Y$ of a Banach space $X$, we say that $Y$ has the {\em $c_0$-extension property\/} (briefly: \EP)
in $X$. Moreover, a Banach space $X$ is said to have the (resp., {\em separable}) {\em $c_0$-extension property\/} if every (resp., separable)
closed subspace of $X$ has the \EP\ in $X$. The \EP\ for Banach spaces has been studied by the authors in \cite{CT}.

The main result of the present paper follows from our Theorem~\ref{thm:main}
which states that, if $K$ is a compact line, then Banach subspaces of $C(K)$ with separable dual have the \EP\ in $C(K)$.
The latter generalizes \cite[Theorem~3.1]{CT}, which implies that, for a compact line $K$, Banach subalgebras of $C(K)$ with separable dual
have the \EP\ in $C(K)$ (though --- see Remark~\ref{thm:rem8} --- the extension constant obtained in \cite[Theorem~3.1]{CT} is smaller).
Note that, although separable Banach subspaces of $C(K)$ always span separable Banach subalgebras, even a one-dimensional
subspace of $C(K)$ can span a Banach subalgebra with nonseparable dual. (By the {\em Banach subalgebra spanned\/} by a subset of $C(K)$
we mean the smallest Banach subalgebra of $C(K)$ containing that set.) Thus Theorem~\ref{thm:main} is much stronger than \cite[Theorem~3.1]{CT}:
the latter does not imply that $C(K)$ has the Sobczyk property.

Let us briefly review some known results stating that certain classes of Banach spaces have the Sobczyk property. We remark that their proofs can usually be adapted to establish the \EP\ or the separable \EP. For instance, a simple adaptation of Veech's proof (\cite{Veech}) of Sobczyk's Theorem shows that weakly compactly generated (briefly: WCG) Banach spaces have the \EP\ (see also \cite[Proposition~2.2]{CT} for details).
Molt\'o's argument (\cite[Theorem~3]{Molto}) shows that, assuming a certain topological condition for the dual ball $(B_{X^*},\text{w*})$, one obtains that the Banach space $X$ has the separable \EP. (Molt\'o's topological condition is satisfied, for instance, by all Corson compacta.) A Banach space $X$
is said to satisfy the {\em separable complementation property\/} (briefly: SCP) if every separable subspace of $X$ is contained in a separable complemented Banach subspace of $X$. Sobczyk's Theorem implies that all Banach spaces with the SCP have the separable \EP. In \cite[Lemma, p.~494]{Valdivia} it is shown that if $K$ is a Valdivia compact space then $C(K)$ has the SCP. Since the separable \EP\ is hereditary to closed subspaces, it follows also that $C(K)$ has the separable \EP\ when $K$ is a continuous image of a Valdivia compactum.

Recall that a compact Hausdorff space $K$ is said to be {\em $\aleph_0$-monolithic\/} if every separable subspace of $K$
is metrizable. It is easy to show (see \cite[Corollary~2.7]{CT}) that if $K$ is $\aleph_0$-monolithic then $C(K)$ has the separable \EP. It turns out that
if $K$ is a compact line then $C(K)$ has the separable \EP\ only in the trivial case when $K$ is $\aleph_0$-monolithic
(Theorem~\ref{thm:monolithic}), though $C(K)$ always has the Sobczyk property.

The {\em double arrow space\/} $\DA=[0,1]\times\{0,1\}$ (endowed with the lexicographic order and order topology) is a separable nonmetrizable compact line and thus every compact line containing a homeomorphic copy of $\DA$ is not $\aleph_0$-monolithic. To the best of the authors knowledge, $C(\DA)$ is the first known example of a $C(K)$ space satisfying the Sobczyk property, yet not having the separable \EP. We note that Patterson \cite{Patterson} has shown that every {\em isometric\/} copy
of $c_0$ in $C(\DA)$ is complemented, but the problem of determining whether $C(\DA)$ has the Sobczyk property remained open. Patterson's argument uses only the fact that closed subsets of $\DA$ admit bounded extension operators and therefore
it works for arbitrary compact lines, though it is of no use to establish complementation of arbitrary {\em isomorphic\/} copies of $c_0$.
In fact, in his review \cite{Godefroy} of \cite{Patterson}, Godefroy mentions that no examples of uncomplemented isomorphic copies of $c_0$ in $C(\DA)$
are given in the paper. This observation led the authors to study the problem whose solution is presented here.

This paper is organized as follows. In Section~\ref{sec:c0EP} we give a characterization of the compact lines $K$
for which $C(K)$ has the (separable) \EP\ (Theorem~\ref{thm:monolithic}) and we prove the main theorem of the paper (Theorem~\ref{thm:main}). The heart of the proof of Theorem~\ref{thm:main} is Lemma~\ref{thm:hardoperators}, whose proof takes all of Section~\ref{sec:technical}.

\end{section}

\begin{section}{The $c_0$-extension property for compact lines}
\label{sec:c0EP}

We start by fixing the terminology and notation for the paper and by recalling a few elementary facts.

Given a compact Hausdorff space $K$, we identify as usual the dual space of $C(K)$ with the space $M(K)$ of finite countably-additive signed regular Borel measures on $K$, endowed with the total variation norm $\Vert\mu\Vert=\vert\mu\vert(K)$ (\cite[Theorem~6.19]{Rudin}). Given a point $p\in K$ and a subset $A$ of $K$, we denote by $\delta_p\in M(K)$ the probability measure with support $\{p\}$ and by $\chilow A$ the characteristic function of $A$.

It is easily proven using the Stone--Weierstrass Theorem (see, for instance, \cite[proof of Theorem~5.4]{Kozsmider}) that all Banach subalgebras (with unity) of $C(K)$ are images $q^*C(L)$ of composition operators $q^*:C(L)\ni f\mapsto f\circ q\in C(K)$,
where $L$ is a compact Hausdorff space and $q:K\to L$ is a continuous surjection. The operator $q^*$ is an isometric embedding and a Banach algebra
homomorphism.

Bounded operators $T:C(K)\to\ell_\infty$ are always identified with bounded sequences of measures $(\mu_n)_{n\ge1}$ in $M(K)$, where $\mu_n$
represents the $n$-th coordinate functional of $T$. In this case we will say that $T$ is associated with $(\mu_n)_{n\ge1}$.
Note that $T$ takes values in $c_0$ if and only if $(\mu_n)_{n\ge1}$ is weak*-null.

When $K$ is a compact line, we always denote by $0$ the minimum element of $K$. A point $t\in K$ is said to be {\em right-isolated\/} (resp.,
{\em left-isolated}) in $K$ if either $t$ is the maximum (resp., minimum) element of $K$ or $t$ admits a successor (resp., predecessor) in $K$.
The clopen subsets of $K$ are finite disjoint unions of clopen intervals of $K$, which are of the form $[0,b]$ or $\left]b',b\right]$, with
$b,b'\in K$ right-isolated.

Given a subset $Q$ of $[0,1]$, we denote by $\DA(Q)$ the set:
\[\DA(Q)=\big([0,1]\times\{0\}\big)\cup\big(Q\times\{1\}\big)\subset[0,1]\times\{0,1\}\]
endowed with the lexicographic order (and the order topology). Then $\DA(Q)$ is a separable compact line and the first projection $\pi_1:\DA(Q)\to[0,1]$ is a continuous increasing surjection. The proof of the next lemma uses a criterion for the extensibility of $c_0$-valued operators that will be proven later
in Section~\ref{sec:technical}.
\begin{lem}\label{thm:DAQ}
If $Q\subset[0,1]$ is uncountable, then $\pi_1^*C\big([0,1]\big)$ does not have the \EP\ in $C\big(\DA(Q)\big)$.
\end{lem}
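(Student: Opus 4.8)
We want to show that $\pi_1^*C([0,1])$ does not have the $c_0$-extension property in $C(\DA(Q))$ when $Q\subset[0,1]$ is uncountable. The natural strategy is to produce a $c_0$-valued operator $T$ defined on the subspace $Y=\pi_1^*C([0,1])$ that admits no bounded extension to all of $C(\DA(Q))$, and to detect the obstruction using the criterion from Section~\ref{sec:technical} (Lemma~\ref{thm:hardoperators}) that characterizes when a weak*-null sequence of measures can be simultaneously "enlarged" to one on the bigger space. Let me sketch this out.

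The plan is to exhibit a bounded operator $T\colon\pi_1^*C\big([0,1]\big)\to c_0$ having no bounded extension to $C\big(\DA(Q)\big)$, the non-extendability being detected through Lemma~\ref{thm:hardoperators}. Since $\pi_1^*$ is an isometric isomorphism of $C\big([0,1]\big)$ onto $\pi_1^*C\big([0,1]\big)$, such an operator is nothing but a weak*-null bounded sequence $(\mu_n)_{n\ge1}$ in $M\big([0,1]\big)$, acting by $T(f\circ\pi_1)=\big(\int f\,\dd\mu_n\big)_{n\ge1}$, and a bounded extension $\widetilde T\colon C\big(\DA(Q)\big)\to c_0$ is the same as a bounded weak*-null sequence $(\nu_n)_{n\ge1}$ in $M\big(\DA(Q)\big)$ with $(\pi_1)_*\nu_n=\mu_n$ for every $n$. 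So the lemma reduces to producing a weak*-null bounded sequence $(\mu_n)_{n\ge1}$ in $M\big([0,1]\big)$ that admits no such lift.

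To fix the operator, I would enumerate as $\big(\left]p_n,q_n\right]\big)_{n\ge1}$ all dyadic intervals $\left]j2^{-m},(j+1)2^{-m}\right]$ ($m\ge1$, $0\le j<2^m$), arranging that every interval of level $m$ precedes every interval of level $m+1$; then $q_n-p_n\to0$. Set $\mu_n=\delta_{p_n}-\delta_{q_n}$, so $\Vert\mu_n\Vert=2$, and since $\int f\,\dd\mu_n=f(p_n)-f(q_n)$ with $q_n-p_n\to0$, the sequence $(\mu_n)_{n\ge1}$ is weak*-null. The relevant — and, at first sight, counterintuitive — feature is that this does \emph{not} force $\mu_n\big(\left[0,t\right[\big)\to0$: indeed $\mu_n\big(\left[0,t\right[\big)=\chilow{\left]p_n,q_n\right]}(t)$, and for every $t\in\left]0,1\right]$ this equals $1$ for infinitely many $n$ (whenever $\left]p_n,q_n\right]$ is the level-$m$ dyadic interval through $t$), so $\mu_n\big(\left[0,t\right[\big)\not\to0$.

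Suppose now a lift $(\nu_n)_{n\ge1}$ exists, with $C:=\sup_n\Vert\nu_n\Vert<\infty$. For $t\in Q$ the point $(t,0)$ is right-isolated in $\DA(Q)$, hence $\left[0,(t,0)\right]$ is clopen and $\nu_n\big(\left[0,(t,0)\right]\big)\to0$; since $\left[0,(t,0)\right]=\pi_1^{-1}\big(\left[0,t\right[\big)\cup\{(t,0)\}$ and $(\pi_1)_*\nu_n=\mu_n$, this reads $\mu_n\big(\left[0,t\right[\big)+\nu_n\big(\{(t,0)\}\big)\to0$. Together with the previous paragraph, to each $t\in Q\cap\left]0,1\right]$ we may attach $M_t$ such that $\big|\nu_n\big(\{(t,0)\}\big)\big|\ge\frac12$ whenever $n$ indexes the level-$m$ dyadic interval through $t$ with $m\ge M_t$. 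Fix $m$; for each of the $2^m$ indices $n$ of level-$m$ intervals, the set of $t\in Q$ lying in the corresponding interval with $M_t\le m$ is contained in $\{t:|\nu_n(\{(t,0)\})|\ge\frac12\}$, which has at most $2C$ elements because $\sum_t|\nu_n(\{(t,0)\})|\le\Vert\nu_n\Vert$. Summing over the $2^m$ intervals, which partition $\left]0,1\right]$, gives $\#\{t\in Q\cap\left]0,1\right]:M_t\le m\}\le 2C\cdot2^m<\infty$; hence $Q\cap\left]0,1\right]=\bigcup_{m\ge1}\{t:M_t\le m\}$ is countable, contradicting the uncountability of $Q$.

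The hard part is precisely this last bookkeeping step. The identity $\nu_n(\{(t,0)\})+\mu_n\big(\left[0,t\right[\big)\to0$ controls the atom of $\nu_n$ at $(t,0)$ only asymptotically in $n$, one $t$ at a time, while the norm bound on $\nu_n$ controls those atoms only $2^m$ at a time; the dyadic organization of the indices is what lets one trade the $\ell_1$-type estimate on the atoms of each $\nu_n$ against the uncountability of $Q$. Everything else — the reduction to a lifting problem, the verification that the sweeping sequence $(\mu_n)$ is weak*-null, and the clopen-set computation on $\DA(Q)$ — is routine.
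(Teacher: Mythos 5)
Your proof is correct, and the operator you construct is essentially the paper's: the paper also takes $\mu_n=\delta_{a_n}-\delta_{b_n}$ for a sequence of shrinking intervals covering every point of $\left[0,1\right[$ infinitely often (your dyadic enumeration is one concrete instance), so that $\mu_n\big([0,t]\big)\not\to0$ for every $t$. Where you genuinely diverge is in how non-extendability is detected. The paper simply invokes the implication (a)$\Rightarrow$(b) of Lemma~\ref{thm:extensioncriterion} (via Remark~\ref{thm:gambiarra}): a $c_0$-valued extension would force $\mu_n\big([0,t]\big)\to0$ for all $t\in Q$ outside a countable set, because for each $n$ the atoms of the extending measure on the fibers $\pi_1^{-1}(t)$ sit on pairwise disjoint sets and hence vanish for all but countably many $t$; that argument is run through the $\NBV$ representation of Lemma~\ref{thm:MKNBVK}. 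You instead reprove this implication directly and quantitatively in the case at hand: a weak*-null lift $(\nu_n)_{n\ge1}$ must carry an atom of modulus at least $\tfrac12$ at $(t,0)$ for all sufficiently high dyadic levels through $t$, and the bound $\sum_x\vert\nu_n(\{x\})\vert\le\Vert\nu_n\Vert\le C$ caps the number of such $t$ at $2C\cdot 2^m$ per level, so $Q$ would be countable. (The paper's atoms sit at $(t,1)$ rather than $(t,0)$; this is immaterial.) Your route buys a self-contained, elementary proof that bypasses the Riemann--Stieltjes machinery and the general extension criterion entirely, at the cost of redoing in quantitative dyadic form the countability bookkeeping that the paper gets for free from disjointness of the fibers; the paper's version is shorter only because that machinery is developed anyway for Theorem~\ref{thm:main}.
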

\begin{proof}
Let $\big(\!\left[a_n,b_n\right[\big)_{n\ge1}$
be a sequence of intervals contained in $[0,1]$ such that $\lim_{n\to+\infty}(b_n-a_n)=0$ and such that every $t\in\left[0,1\right[$
belongs to infinitely many $\left[a_n,b_n\right[$. Setting $\mu_n=\delta_{a_n}-\delta_{b_n}$, then $(\mu_n)_{n\ge1}$ is weak*-null
in $M\big([0,1]\big)$ and thus we have an operator $T:C\big([0,1]\big)\to c_0$ associated with $(\mu_n)_{n\ge1}$. It follows
from the proof of Lemma~\ref{thm:extensioncriterion} (see Remark~\ref{thm:gambiarra}) that there is no bounded operator $T':C\big(\DA(Q)\big)\to c_0$
such that $T'\circ\pi_1^*=T$. This concludes the proof.
\end{proof}

\medskip

The following result characterizes compact lines $K$ for which $C(K)$ has the (separable) \EP.
\begin{teo}\label{thm:monolithic}
Let $K$ be a compact line. The following statements are equivalent:
\begin{itemize}
\item[(a)] $K$ is $\aleph_0$-monolithic;
\item[(b)] $C(K)$ has the \EP;
\item[(c)] $C(K)$ has the separable \EP.
\end{itemize}
\end{teo}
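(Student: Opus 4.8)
The plan is to prove the three implications (a)$\Rightarrow$(b)$\Rightarrow$(c)$\Rightarrow$(a). The implication (a)$\Rightarrow$(b) should be essentially known: if $K$ is $\aleph_0$-monolithic then $C(K)$ has the separable \EP\ by \cite[Corollary~2.7]{CT}, but in fact for compact lines $\aleph_0$-monolithicity gives more. The cleanest route is to observe that an $\aleph_0$-monolithic compact line is in fact Valdivia (or at least that every separable subspace of $C(K)$ sits inside a separable $1$-complemented subspace), so that $C(K)$ has the SCP, hence the separable \EP; combined with the fact that every Banach subspace of $C(K)$ with separable dual is in particular WCG-friendly\,---\,actually, the sharper observation is that if $K$ is $\aleph_0$-monolithic then Theorem~\ref{thm:main} (which we may assume, it is stated later but this is a plan) already yields that subspaces with separable dual have the \EP, and every closed subspace of $C(K)$ is then handled by monolithicity forcing the relevant separable pieces to be metrizable. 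I would actually prefer to deduce (a)$\Rightarrow$(b) directly: monolithic + line implies $C(K)$ is WCG (this is classical for monolithic compact lines, which are Eberlein), and WCG Banach spaces have the \EP\ by Veech's argument as recalled in the introduction and \cite[Proposition~2.2]{CT}.

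The implication (b)$\Rightarrow$(c) is trivial, since the \EP\ is by definition stronger than the separable \EP.

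The substantive implication is (c)$\Rightarrow$(a), equivalently its contrapositive: if $K$ is a compact line that is \emph{not} $\aleph_0$-monolithic, then $C(K)$ fails the separable \EP. Here is where Lemma~\ref{thm:DAQ} enters. First I would use the structure theory of compact lines to show that a non-$\aleph_0$-monolithic compact line $K$ must contain, as a closed subspace, a homeomorphic (and order-isomorphic) copy of $\DA(Q)$ for some uncountable $Q\subset[0,1]$: indeed, non-monolithicity produces a separable nonmetrizable closed subspace $S\subseteq K$, and a separable nonmetrizable compact line has uncountably many jumps (pairs $t<t^+$ with nothing in between) accumulating on a nonmetrizable way, and by a Cantor-type extraction one finds an increasing continuous surjection from a closed subset of $S$ onto $[0,1]$ whose fibers are singletons except over an uncountable set where they are two-point jumps\,---\,that is precisely a copy of $\DA(Q)$ with $Q$ uncountable. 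Then, via the composition operator, $\pi_1^*C([0,1])$ is a closed subspace of $C(\DA(Q))$ which by Lemma~\ref{thm:DAQ} lacks the \EP\ in $C(\DA(Q))$; since $C(\DA(Q))$ is isometric to a quotient of $C(K)$ by functions vanishing on the copy of $\DA(Q)$, and more usefully $C(\DA(Q))$ embeds complementably-or-not into $C(K)$ via the restriction/extension machinery available for closed subsets of compact lines (closed subsets of a compact line admit bounded extension operators, as noted in the introduction via Patterson's argument), one transports the failure of the \EP\ up to $C(K)$. The one delicate point is that the \EP\ is hereditary to closed subspaces but not obviously ``co-hereditary'', so I must be careful: what I really need is that if a closed subspace $Z$ of $C(\DA(Q))$ fails the \EP\ in $C(\DA(Q))$, and $C(\DA(Q))$ is complemented in $C(K)$ (which holds because $\DA(Q)$ is a closed subset of the compact line $K$ admitting a bounded extension operator, giving $C(\DA(Q))$ as a complemented subspace of $C(K)$), then $Z$ fails the \EP\ in $C(K)$ as well\,---\,this is a routine diagram chase with the projection. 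Note $\pi_1^*C([0,1])$ is separable (it is isometric to $C([0,1])$), so this exhibits a separable closed subspace of $C(K)$ without the \EP, i.e.\ $C(K)$ fails the separable \EP.

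The main obstacle I expect is the combinatorial/topological step of extracting a copy of $\DA(Q)$ with $Q$ uncountable from an arbitrary non-$\aleph_0$-monolithic compact line\,---\,making precise the ``Cantor-type extraction'' and checking it yields an order isomorphism onto $\DA(Q)$ rather than merely a continuous surjection\,---\,together with the bookkeeping needed to see that $C(\DA(Q))$ really is complemented in $C(K)$ when $\DA(Q)$ sits as a closed subset of the line $K$. The rest is soft: (a)$\Rightarrow$(b) via WCG/Eberlein, and (b)$\Rightarrow$(c) by definition.
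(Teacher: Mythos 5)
Your implications (b)$\Rightarrow$(c) and (c)$\Rightarrow$(a) are essentially the paper's argument: the paper extracts the copy of $\DA(Q)$, $Q$ uncountable, by citing \cite[Lemma~2.5]{KK} (so your ``Cantor-type extraction'' is a known structural fact you could simply quote), embeds $C\big(\DA(Q)\big)$ into $C(K)$ via the extension operators of \cite[Lemma~4.2]{Kubis}, and then invokes Lemma~\ref{thm:DAQ}. Your worry about ``co-heredity'' is unnecessary: the separable \EP\ passes to \emph{every} closed subspace $Z$ of $C(K)$ (extend a $c_0$-valued operator from a separable subspace of $Z$ to all of $C(K)$ and restrict back to $Z$), so you do not need $C\big(\DA(Q)\big)$ to be complemented, only isomorphic to a closed subspace.

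The genuine gap is in (a)$\Rightarrow$(b). Your preferred route rests on the claim that an $\aleph_0$-monolithic compact line is Eberlein, hence $C(K)$ is WCG; this is false, and the paper itself records the counterexample immediately after the theorem: the ordinal segment $[0,\alpha]$ with $\alpha$ uncountable is an $\aleph_0$-monolithic compact line that is not Fr\'echet, hence not Eberlein, so $C\big([0,\alpha]\big)$ is not WCG. Your fallback routes (Valdivia/SCP, \cite[Corollary~2.7]{CT}) only yield the \emph{separable} \EP, i.e.\ statement (c), and Theorem~\ref{thm:main} only covers subspaces with separable dual --- none of these gives the full \EP\ for an arbitrary closed subspace $X$, and the phrase ``monolithicity forcing the relevant separable pieces to be metrizable'' is not an argument. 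The paper's actual proof of (a)$\Rightarrow$(b) is: extend $T:X\to c_0$ to a bounded operator $S:C(K)\to\ell_\infty$ associated with a bounded sequence $(\mu_n)_{n\ge1}$ in $M(K)$; by \cite[Lemma~2.1]{KK} each $\supp\mu_n$ is separable, so the closure $F$ of $\bigcup_n\supp\mu_n$ is separable and, by monolithicity, metrizable; since the functions vanishing on $F$ lie in $\Ker(S)\subset S^{-1}[c_0]$ and $C(K)$ modulo that ideal is $C(F)$, the quotient $C(K)/S^{-1}[c_0]$ is separable, and \cite[Proposition~2.2,~(a)]{CT} then produces a $c_0$-valued bounded extension of $S\vert_{S^{-1}[c_0]}$, hence of $T$. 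Some such argument is needed; your proposal does not contain it.
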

\begin{proof}
Assume (a) and let us prove (b). Let $T:X\to c_0$ be a bounded operator defined on a closed subspace $X$ of $C(K)$. Let $S:C(K)\to\ell_\infty$ be a bounded
extension of $T$. We will show that the quotient $C(K)/S^{-1}[c_0]$ is separable and it will follow from \cite[Proposition~2.2,~(a)]{CT} that $S\vert_{S^{-1}[c_0]}$ has a $c_0$-valued bounded extension to $C(K)$. The operator $S$ is associated with a bounded sequence $(\mu_n)_{n\ge1}$
in $M(K)$. It follows from \cite[Lemma~2.1]{KK} that $\supp\mu_n$ is separable for all $n$ and thus the closure $F$ of $\bigcup_{n=1}^\infty\supp\mu_n$
is also separable; by (a), $F$ is metrizable. The subspace $C(K|F)$ of $C(K)$ consisting of the functions that vanish on $F$ is clearly contained
in $\Ker(S)\subset S^{-1}[c_0]$. Since $C(K)/C(K|F)\equiv C(F)$ is separable, it follows that also $C(K)/S^{-1}[c_0]$ is separable.

Clearly, (b) implies (c). Now assume (c) and let us prove (a).
Assuming by contradiction that $K$ is not $\aleph_0$-monolithic,
it follows from \cite[Lemma~2.5]{KK} that $K$ contains a homeomorphic copy of $\DA(Q)$, for some uncountable subset $Q$ of $[0,1]$.
Since $K$ is a compact line, by \cite[Lemma~4.2]{Kubis}, every closed subset of $K$ admits a bounded extension operator and therefore
$C(K)$ contains an isomorphic copy of $C\big(\DA(Q)\big)$. The separable \EP\ is hereditary to closed subspaces and thus $C\big(\DA(Q)\big)$
would also have the separable \EP, contradicting Lemma~\ref{thm:DAQ}.
\end{proof}

\begin{rem}
As mentioned in the introduction, WCG Banach spaces have the \EP. Let us see that the converse does not hold.
Recall that a Banach space $C(K)$ is WCG if and only if $K$ is an Eberlein compact space (\cite[Theorem~14.9]{Fabian}). There are
examples of $\aleph_0$-monolithic compact lines that are not Eberlein compact spaces. For instance, an ordinal segment $[0,\alpha]$
is $\aleph_0$-monolithic; namely, it is easily proven by induction on $\alpha$ that the closure of a countable subset of $[0,\alpha]$ is countable. Moreover,
if $\alpha$ is uncountable, then $[0,\alpha]$ is not a Fr\'echet topological space (\cite[Definition~4.48]{Fabian2})
and hence it is not an Eberlein compact space (\cite[Theorem~4.50]{Fabian2}).
It follows from Theorem~\ref{thm:monolithic} that there are $C(K)$ spaces that have the \EP\ but are not WCG.
\end{rem}

Our main result follows trivially from the next theorem.
\begin{teo}\label{thm:main}
If $K$ is a compact line then every Banach subspace of $C(K)$ with separable dual has the \EP\ in $C(K)$.
\end{teo}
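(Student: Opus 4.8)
The plan is to reduce the statement about an arbitrary Banach subspace $X \subset C(K)$ with separable dual to a statement about a well-chosen countable amount of ``coordinate data'' inside $K$, and then to invoke the extension criterion (Lemma~\ref{thm:extensioncriterion}) whose proof occupies Section~\ref{sec:technical}. First I would fix a bounded operator $T : X \to c_0$ and extend it (using injectivity of $\ell_\infty$) to $S : C(K) \to \ell_\infty$, associated with a bounded sequence of measures $(\mu_n)_{n \ge 1}$ in $M(K)$. The goal is to produce a bounded $c_0$-valued extension of $T$ to all of $C(K)$; by \cite[Proposition~2.2]{CT} it suffices to find a bounded $c_0$-valued extension of $S|_{S^{-1}[c_0]}$, and for that it is enough to control the ``non-$c_0$ part'' of $S$ in terms of structure that the compact line makes manageable.

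The key step is to capture the relevant information about $X$ and about $(\mu_n)_{n\ge1}$ in a separable, hence metrizable-after-monolithic-reduction-fails, piece of $K$. Since $X^*$ is separable, $X$ is a separable Banach space, so by \cite[Lemma~2.1]{KK} each $\operatorname{supp}\mu_n$ is separable and, more importantly, we may choose a countable set $D \subset K$ whose closure $F$ is separable and which ``sees'' both the supports of the $\mu_n$ and a norming/determining set for $X$. Here, unlike in Theorem~\ref{thm:monolithic}, $F$ need not be metrizable; the whole point of Theorem~\ref{thm:main} over the monolithic case is that we cannot collapse to a metrizable retract. Instead, the separable compact line $F$ (or a quotient of $K$ onto a separable compact line refining the relevant clopen data) must be analyzed directly: by the structure theory of separable compact lines, $F$ is a continuous increasing image of some $\DA(Q)$-type object, and one uses the order structure --- right-isolated points, clopen intervals $]b',b]$ --- to write down explicit extension data for $S$ coordinatewise.

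The technical heart, which I expect to be the main obstacle, is the construction underlying Lemma~\ref{thm:extensioncriterion}: given the weak*-null sequence $(\mu_n)$ (or the ``bad'' part of $(\mu_n)$ after subtracting a weak*-convergent tail), one must build, on a separable compact line, a bounded operator $C(K) \to c_0$ extending $S|_X$, and the obstruction to doing so is precisely the phenomenon exhibited in Lemma~\ref{thm:DAQ} --- measures of the form $\delta_{a_n} - \delta_{b_n}$ with $b_n - a_n \to 0$ that cannot be extended across the ``splitting'' points of $\DA(Q)$. So the argument must show that when $X^*$ is separable, such an obstruction cannot be embedded: the separability of $X^*$ forces the measures $\mu_n$ restricted to the relevant split points to be summable in an appropriate sense, or forces $X$ to live inside a composition-operator image $q^*C(L)$ with $L$ a separable compact line over which the extension does exist. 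This is where I anticipate the real work: quantitatively relating separability of $X^*$ to the absence of the double-arrow obstruction, and then assembling the coordinatewise extensions into a single bounded operator with the right norm bound (the constant being worse than in \cite[Theorem~3.1]{CT}, as flagged in Remark~\ref{thm:rem8}).

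Concretely, I would structure the proof as: (1) extend $T$ to $S$ on $C(K)$ via $\ell_\infty$; (2) pass to a separable sublinegebra/quotient of $C(K)$ carrying $X$ and the $\mu_n$, landing on a separable compact line; (3) apply Lemma~\ref{thm:hardoperators} / Lemma~\ref{thm:extensioncriterion} to this separable situation, using that $X^*$ separable rules out the $\DA(Q)$ obstruction; (4) transport the resulting $c_0$-valued extension back to $C(K)$ and combine with \cite[Proposition~2.2]{CT} to conclude that $S^{-1}[c_0] \supset X$ is complemented in an appropriate sense, yielding the desired extension of $T$. The delicate points are step (2) --- making the reduction to a separable line without losing the extension problem's structure --- and step (3) --- the core combinatorial-measure-theoretic argument on separable compact lines, which is exactly what Section~\ref{sec:technical} is devoted to.
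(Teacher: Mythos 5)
Your overall map of the proof --- reduce to a quotient line, use the extension criterion of Lemma~\ref{thm:extensioncriterion}, and let the separability of $X^*$ defeat the $\DA(Q)$ obstruction --- points in the right direction, but the plan as written has concrete gaps. First, the reduction in your step (2) is not the right one. Taking the closure $F$ of $\bigcup_n\supp\mu_n$ (plus a determining set for $X$) is the move used for Theorem~\ref{thm:monolithic}, and as you note it dies here because $F$ need not be metrizable; appealing to ``the structure theory of separable compact lines'' does not rescue it, since a separable compact line such as $\DA$ is precisely where the obstruction of Lemma~\ref{thm:DAQ} lives. The reduction that actually works goes the other way: first replace $K$ by the zero-dimensional line $K\times\{0,1\}$, and then use only the separability of $X$ (not of $X^*$) to place $X$ inside $q^*C(L)$ for a continuous increasing surjection $q:K\to L$ onto a \emph{metrizable} zero-dimensional compact line $L$ (Lemma~\ref{thm:lemaCL}). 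One then works with operators on the separable space $C(L)$, where Sobczyk's theorem applies, and asks which of them lift through $q^*$.

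Second, and more seriously, your steps (3)--(4) treat Lemma~\ref{thm:hardoperators} as if it produced an exact extension, but it only produces an approximate one: $T=T'\circ q^*+S$ with $\Vert S\circ R\Vert\le\varepsilon$, where the error $S$ need not vanish on $X$. Some device is needed to pass from ``every $T_0\in\mathcal B(X,c_0)$ is approximable in norm by restrictions of operators from a fixed ball of $\mathcal B\big(C(K),c_0\big)$'' to actual surjectivity of the restriction map; the paper does this by showing that the closure of the image of the unit ball contains a ball of radius $\frac18$ and invoking the open-mapping-type lemma \cite[Lemma~2.23]{Fabian2}. Nothing in your proposal supplies this step. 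Finally, your account of where the separability of $X^*$ enters is off: it is not used to show that $X$ sits over a nice quotient line (mere separability of $X$ does that), nor to show that the $\mu_n$ are ``summable at the split points''; it is used, via weak*-fragmentability of $X^*$ and the oscillation hierarchy of the map $p\mapsto R^*(\delta_p)$, to split each $\mu_n$ as $\mu'_n+\nu_n$ with $\mu'_n$ satisfying condition (b) of Lemma~\ref{thm:extensioncriterion} and $\nu_n$ nearly annihilated by $R^*$ (Lemma~\ref{thm:hardmeasures}). Without identifying that mechanism, the claim that ``separability of $X^*$ rules out the double-arrow obstruction'' remains a hope rather than an argument.
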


\begin{cor}\label{thm:obaobaSobczyk}
If $K$ is a compact line then the space $C(K)$ has the Sobczyk property.\qed
\end{cor}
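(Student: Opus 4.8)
The plan is to deduce this immediately from Theorem~\ref{thm:main}; as the text already signals, the deduction is trivial, and the genuine difficulty lies entirely in establishing Theorem~\ref{thm:main} (ultimately in Lemma~\ref{thm:hardoperators}), not here.

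Concretely, I would let $Y$ be a closed subspace of $C(K)$ that is isomorphic to $c_0$ and fix an isomorphism $T\colon Y\to c_0$ of $Y$ onto $c_0$. Since the dual of $c_0$ is $\ell_1$, which is separable, the dual $Y^*$ is also separable (it is isomorphic to $\ell_1$). Hence Theorem~\ref{thm:main} applies to the subspace $Y$ and yields that $Y$ has the \EP\ in $C(K)$.

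It then remains only to convert the $c_0$-extension property into complementation, which is routine. Viewing $T$ as a bounded operator from $Y$ into $c_0$ and invoking the \EP, I obtain a bounded operator $S\colon C(K)\to c_0$ with $S|_Y=T$. Then $P:=T^{-1}\circ S\colon C(K)\to Y$ is a bounded linear map with $P|_Y=\mathrm{id}_Y$, i.e., a bounded projection of $C(K)$ onto $Y$. Thus $Y$ is complemented in $C(K)$, which is precisely the Sobczyk property. There is no real obstacle in this argument; the entire burden has been shifted onto Theorem~\ref{thm:main}.
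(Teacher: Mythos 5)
Your proposal is correct and is exactly the deduction the paper intends (the paper marks the corollary as immediate from Theorem~\ref{thm:main} and omits the details): since a subspace $Y$ isomorphic to $c_0$ has dual isomorphic to $\ell_1$, hence separable, Theorem~\ref{thm:main} gives the \EP\ for $Y$, and extending an isomorphism $T\colon Y\to c_0$ to $S\colon C(K)\to c_0$ yields the projection $T^{-1}\circ S$ onto $Y$. Nothing is missing.
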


The proof of Theorem~\ref{thm:main} requires two lemmas. The first one is a simple adaptation of \cite[Lemma~3.1]{KK}.
\begin{lem}\label{thm:lemaCL}
Let $K$ be a zero-dimensional compact line and $X$ be a separable Banach subspace of $C(K)$. Then there exists a metrizable zero-dimensional compact line
$L$ and a continuous increasing surjection $q:K\to L$ such that $X$ is contained in $q^*C(L)$.
\end{lem}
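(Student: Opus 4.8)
The plan is to realize $L$ as an order-quotient of $K$ that retains only countably many order cuts, chosen so as to approximate the functions of $X$. First I would fix a countable dense subset $\{f_n\}_{n\ge1}$ of $X$. Since $K$ is zero-dimensional, the linear span of the characteristic functions of clopen subsets of $K$ is dense in $C(K)$; and, as every clopen subset of $K$ is a finite disjoint union of clopen intervals $[0,b]$ or $\left]b',b\right]$ with $b,b'$ right-isolated, and $\chi_{\left]b',b\right]}=\chi_{[0,b]}-\chi_{[0,b']}$, every element of $C(K)$ is a uniform limit of finite linear combinations of functions $\chi_{[0,b]}$ with $b$ right-isolated. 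For each $n$ and each $k\ge1$ I would pick such a combination $s_{n,k}$ with $\Vert f_n-s_{n,k}\Vert<1/k$, and let $A$ be the (necessarily countable) set of all right-isolated points of $K$ occurring in some $s_{n,k}$.

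Next I would define $x\sim y$ to mean $\{a\in A:x\le a\}=\{a\in A:y\le a\}$, i.e.\ $x$ and $y$ lie on the same side of every cut determined by $A$. The $\sim$-classes are order-convex (a short check), so the quotient set $L=K/\!\sim$ carries a natural linear order $\preceq$ making the quotient map $q:K\to L$ increasing and surjective; I would give $L$ the order topology. The crux is to verify that $L$ is a metrizable zero-dimensional compact line. To see that $q$ is continuous, one computes, for $y\in K$, that $q^{-1}\big(\{z\in L:z\prec q(y)\}\big)=\{x\in K:x<y\}\cap\{x\in K:x\not\sim y\}$, and $\{x:x\not\sim y\}$ is a union of the clopen sets $[0,a]$ (for $a\in A$ with $a<y$) together with the clopen sets $K\setminus[0,a]$ (for $a\in A$ with $y\le a$), hence open; the remaining subbasic open sets of $L$ are handled symmetrically. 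Being a continuous image of the compact space $K$, $L$ is compact, and it is Hausdorff since it is linearly ordered with its order topology. For each $a\in A$ the clopen interval $[0,a]$ is $\sim$-saturated, so $q([0,a])$ is a clopen initial segment $[0,q(a)]$ of $L$; moreover these initial segments separate the points of $L$, because two points of $K$ lying in distinct $\sim$-classes are separated by some cut from $A$. A standard compactness argument then shows that the countable Boolean algebra generated by $\{[0,q(a)]:a\in A\}$ is a base of clopen subsets of $L$; hence $L$ is second countable---and therefore metrizable---and zero-dimensional.

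Finally, I would observe that $q^*C(L)$ is a closed subspace of $C(K)$ because $q^*$ is an isometric embedding. For each $a\in A$ we have $\chi_{[0,a]}=\chi_{[0,q(a)]}\circ q\in q^*C(L)$, since $[0,q(a)]$ is clopen in $L$; hence every $s_{n,k}$ lies in $q^*C(L)$, and letting $k\to+\infty$ shows that $f_n\in q^*C(L)$ for all $n$. Since $\{f_n\}_{n\ge1}$ is dense in $X$ and $q^*C(L)$ is closed, we conclude $X\subset q^*C(L)$, as desired.

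The step I expect to be the main obstacle is the middle paragraph: establishing continuity of $q$ and producing a countable base of clopen sets for $L$ (which together give that $L$ is a metrizable zero-dimensional compact line). The other steps are essentially bookkeeping, and this is the sense in which the argument is a ``simple adaptation'' of \cite[Lemma~3.1]{KK}.
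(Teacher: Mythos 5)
Your proof is correct and follows essentially the same route as the paper: both select a countable set $A$ of right-isolated points of $K$ such that the functions $\chi_{[0,a]}$, $a\in A$, approximate a dense subset of $X$, and both take $L$ to be the resulting order-quotient of $K$ (your relation $\sim$ is exactly the relation of not being separated by the chosen generators). The only divergence is that the paper obtains the pair $(L,q)$ by invoking the Stone--Weierstrass fact that the Banach subalgebra spanned by $\{\chi_{[0,a]}:a\in A\}$ equals $q^*C(L)$ for some metrizable compactum $L$, reading off zero-dimensionality and the order from the generators being simple monotone functions, whereas you build the quotient explicitly and verify continuity, compactness, second countability and zero-dimensionality by hand; your verifications are sound.
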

\begin{proof}
Since $K$ is zero-dimensional, the set:
\begin{equation}\label{eq:chi0t}
\big\{\chilow{[0,s]}:\text{$s$ right-isolated in $K$}\big\}
\end{equation}
spans a dense subspace of $C(K)$. By the separability of $X$, there exists a countable subset $E$ of \eqref{eq:chi0t} such that $X$ is contained in the
Banach space spanned by $E$. The Banach algebra spanned by $E$ is of the form $q^*C(L)$, with $L$ a compact metrizable space and $q:K\to L$ a continuous
surjection. Since the elements of $E$ are simple functions, the space $L$ is zero-dimensional. Finally, the fact that the elements of $E$ are monotone
functions implies that $q^{-1}(t)$ is a closed interval of $K$, for all $t\in L$. Hence there exists a unique order on $L$ such that $q$ is increasing
and the order-topology coincides with the topology of $L$.
\end{proof}

The hard work in proving Theorem~\ref{thm:main} lies within the proof of the next lemma. Such proof will be presented in Section~\ref{sec:technical}.
\begin{lem}\label{thm:hardoperators}
Let $K$ and $L$ be zero-dimensional compact lines, $q:K\to L$ be a continuous increasing surjection and $X$ be a Banach space with separable dual. Assume that $L$ is metrizable. Given bounded operators $T:C(L)\to c_0$, $R:X\to C(L)$ and $\varepsilon,\varepsilon'>0$, there exist bounded operators $T':C(K)\to c_0$ and $S:C(L)\to c_0$ such that:
\begin{gather*}
T=T'\circ q^*+S,\\
\Vert T'\Vert\le(4+\varepsilon')\Vert T\Vert,\quad\Vert S\Vert\le(1+\varepsilon')\Vert T\Vert\quad\text{and}\quad
\Vert S\circ R\Vert\le\varepsilon.
\end{gather*}
\end{lem}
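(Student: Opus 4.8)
The idea is to exploit the fact that $T$ is given by a weak*-null sequence of measures $(\nu_n)_{n\ge1}$ in $M(L)$ and that $L$ is a metrizable zero-dimensional compact line, so each $\nu_n$ is concentrated on a countable set and is a countable combination of Dirac masses at points of $L$. Since $L$ is zero-dimensional, every point $t\in L$ is approached monotonically (from the left, from the right, or is isolated on one side) by right-isolated points of $L$, and $q^*$ restricted to characteristic functions of clopen initial segments carries information about the fibers $q^{-1}(t)$, which are closed intervals of $K$. The first move is to split each $\nu_n$ as $\nu_n=\nu_n'+\sigma_n$, where $\nu_n'$ collects the ``large'' atoms (those with mass bounded below by a threshold depending on $\varepsilon'$ and $\|T\|$) and $\sigma_n$ the tail; then $\|\sigma_n\|$ is uniformly small and $(\sigma_n)$ is weak*-null, so it defines an operator $S_0:C(L)\to c_0$ with small norm. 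The serious work is to handle the finitely-supported pieces $\nu_n'$.

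For the large-atom part, the plan is to lift each point $t$ occurring in some $\nu_n'$ to a point $\widetilde t\in K$ in the fiber $q^{-1}(t)$, chosen carefully (e.g.\ an endpoint of the interval $q^{-1}(t)$ with a prescribed side, dictated by whether $t$ is left- or right-isolated in $L$ or a two-sided limit), and to define $T'$ by the sequence of measures $\widetilde\nu_n'=\sum c_{n,t}\,\delta_{\widetilde t}$ obtained by transporting the atoms. The identity $T=T'\circ q^*+S$ then forces $S$ to be associated with $\nu_n-(\widetilde\nu_n')\circ(q^*)$, i.e.\ with $\nu_n'$ pushed through ``$f\mapsto f\circ q$'' minus itself, plus $\sigma_n$; one checks that for $f\in C(L)$ one has $(T'\circ q^*)(f)=\big(\langle f\circ q,\widetilde\nu_n'\rangle\big)_n=\big(\langle f,\nu_n'\rangle\big)_n$ because $f\circ q(\widetilde t)=f(t)$, so in fact $T'\circ q^*$ reproduces the large-atom part exactly and $S$ is associated only with $(\sigma_n)_n$. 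This makes $S=S_0$, giving $\|S\|\le(1+\varepsilon')\|T\|$ immediately and reducing the norm bound on $T'$ to controlling $\|\widetilde\nu_n'\|=\|\nu_n'\|\le\|\nu_n\|+\|\sigma_n\|\le(1+\varepsilon')\|T\|$ — which is better than the stated $(4+\varepsilon')$, so presumably the factor $4$ is the price of the next, harder, requirement.

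The real obstacle is the estimate $\|S\circ R\|\le\varepsilon$, which must be arranged while $S$ is associated with $(\sigma_n)_n$ and $R:X\to C(L)$ is a fixed operator into the space where $T$ lives, with $X^*$ separable. The point is that, although each $\|\sigma_n\|$ is small, the composition $S\circ R$ need not be small unless the threshold splitting is done adaptively with respect to $R$. Here I would use the separability of $X^*$: fix a countable dense set $\{x_k\}$ in $B_X$, and observe that $R^*:M(L)\to X^*$ is weak*-to-weak* continuous, so $R^*\nu_n\to0$ weak* in $X^*$; choosing the atom threshold for $\nu_n$ to depend on $n$ and to go to $0$ slowly enough, while discarding into $\sigma_n$ only atoms at points $t$ for which $|(R x_k)(t)|$ is controlled for all $k\le n$, one forces $\sup_n\|R^*\sigma_n\|_{X^*}=\|S\circ R\|$ below $\varepsilon$. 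This is exactly the kind of ``diagonal against a countable dense set'' argument that Veech-type proofs of Sobczyk's theorem use, and it is where metrizability of $L$ and separability of $X^*$ are both genuinely needed. I expect the bookkeeping in this last step — simultaneously keeping $(\sigma_n)$ weak*-null, $\|\sigma_n\|$ small, $\|R^*\sigma_n\|$ small, and the transported measures $\widetilde\nu_n'$ weak*-null so that $T'$ really maps into $c_0$ — to be the main technical burden, and the loss from $1+\varepsilon'$ to $4+\varepsilon'$ in $\|T'\|$ to come from needing, in some fibers, to use a convex combination of the two endpoints of $q^{-1}(t)$ rather than a single Dirac mass (each endpoint contributing a copy of the mass, and the two-sided approximation of limit points of $L$ doubling it again).
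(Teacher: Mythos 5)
There is a genuine gap here, in fact two. First, your starting point --- that every measure on a metrizable zero-dimensional compact line is a countable combination of Dirac masses --- is false: the Cantor set is such a line and carries the non-atomic Cantor measure, so the split into ``large atoms'' plus a small tail is not available in general. Second, and more fundamentally, even for purely atomic $\mu_n$ your plan provides no mechanism forcing the lifted sequence $(\tilde\nu'_n)_{n\ge1}$ to be weak*-null in $M(K)$, i.e.\ forcing $T'$ to actually take values in $c_0$; this is the crux of the whole lemma. Lemma~\ref{thm:extensioncriterion} shows that an operator on $C(L)$ associated with $(\mu'_n)_{n\ge1}$ factors through $q^*$ precisely when $\mu'_n\big([0,t]\big)\to0$ for all but countably many $t$ with nontrivial fiber, and Lemma~\ref{thm:DAQ} exhibits a $T$ built from two unit atoms per measure (namely $\mu_n=\delta_{a_n}-\delta_{b_n}$) admitting no such factorization. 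Applied to such a $T$, your scheme would place all the mass into the lifted part and only a negligible tail into $S$, producing a ``$T'$'' whose associated measures are not weak*-null on $K$. The point is that $S$ cannot in general be a small-norm remainder: it must be allowed to have norm comparable to $\Vert T\Vert$ (the statement allows $\Vert S\Vert\le(1+\varepsilon')\Vert T\Vert$), and the entire difficulty is to choose this \emph{large} piece so that nevertheless $\Vert S\circ R\Vert\le\varepsilon$ while the complementary piece satisfies the countable-exceptional-set condition. Your remark that your bound $\Vert T'\Vert\le(1+\varepsilon')\Vert T\Vert$ is ``better than the stated $(4+\varepsilon')$'' is a symptom of this: the stated constant is the price of subtracting a genuinely large $\nu_n$ from $\mu_n$ before lifting.

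The paper achieves this with a mechanism quite different from the Veech-type diagonalization against a countable dense subset of $B_X$ that you propose. Separability of $X^*$ enters through weak*-fragmentability: a transfinite oscillation hierarchy $H_\alpha\subset L$ for the map $p\mapsto R^*(\delta_p)$ (Lemma~\ref{thm:Halphas}) locates, inside each member $I$ of a fine clopen partition of $L$, a closed set $H_{\alpha(I)}\cap I$ on which this map has oscillation less than $\delta$. After normalizing $\mu_n$ to have zero mass on each such $I$ (Lemma~\ref{thm:lematildemu}), the measure is pushed onto $H_{\alpha(I)}\cap I$ while preserving its distribution function at points of that set (Lemma~\ref{thm:skeleton}); the pushed measure becomes $\nu_n$, whose image under $R^*$ is small because of the small oscillation and zero total mass on each $I$ (Lemma~\ref{thm:flower}), while the difference $\mu'_n=\mu_n-\nu_n$ has distribution function tending to $0$ except at the countably many points $\max\big(H_{\alpha(I)}\cap I\big)$. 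None of these steps is recoverable from an atom-thresholding argument, so the proposal would need to be rebuilt around some substitute for the fragmentation hierarchy.
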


\begin{proof}[Proof of Theorem~\ref{thm:main}]
We first observe that it is sufficient to prove the theorem in the case when $K$ is zero-dimensional. Namely, if $K$ is an arbitrary compact line then
$K\times\{0,1\}$, endowed with the lexicographic order, is a zero-dimensional compact line and we have an isometric embedding $\pi_1^*$
of $C(K)$ into $C\big(K\times\{0,1\}\big)$ induced by the first projection $\pi_1:K\times\{0,1\}\to K$.

In what follows, $K$ is a zero-dimensional compact line and $X$ is a Banach subspace of $C(K)$ having separable dual. Our goal is to show that the restriction
operator $\mathfrak r:\mathcal B\big(C(K),c_0\big)\to\mathcal B(X,c_0)$ is onto, where $\mathcal B(X_1,X_2)$ denotes the space of bounded operators
from $X_1$ to $X_2$. To this aim, we will prove that the closure of the image of
the unit ball of $\mathcal B\big(C(K),c_0\big)$ under $\mathfrak r$ contains the open ball of $\mathcal B(X,c_0)$ of radius $\frac18$. The surjectivity
of $\mathfrak r$ will then follow from \cite[Lemma~2.23]{Fabian2}.

Pick $L$ and $q:K\to L$ as in Lemma~\ref{thm:lemaCL} and denote by $R:X\to C(L)$ the restriction to $X$ of the isometry $(q^*)^{-1}:q^*C(L)\to C(L)$. Let $T_0\in\mathcal B(X,c_0)$ be fixed with $\Vert T_0\Vert<\frac18$ and let $\varepsilon>0$ be given. Since $C(L)$ is separable, by Sobczyk's Theorem,
there exists $T\in\mathcal B\big(C(L),c_0\big)$ with $\Vert T\Vert\le2\Vert T_0\Vert<\frac14$ and $T\circ R=T_0$. Apply Lemma~\ref{thm:hardoperators}
with $\varepsilon'>0$ chosen such that $(4+\varepsilon')\Vert T\Vert\le1$. Then $T'\in\mathcal B\big(C(K),c_0\big)$ satisfies
$\Vert T'\Vert\le1$ and $\Vert\mathfrak r(T')-T_0\Vert=\Vert S\circ R\Vert\le\varepsilon$.
\end{proof}

\begin{rem}\label{thm:rem8}
The proof of Theorem~\ref{thm:main} actually shows that given a compact line $K$, a Banach subspace $X$ of $C(K)$ with separable dual,
a bounded operator $T:X\to c_0$, and $\varepsilon>0$, there exists an extension $T':C(K)\to c_0$ of $T$ with $\Vert T'\Vert\le(8+\varepsilon)\Vert T\Vert$. The authors do not know if this estimate on the extension constant can be improved. We note that
in \cite[Theorem~3.1]{CT} we have shown that, under the additional assumption that $X$ is a Banach subalgebra of $C(K)$, the constant $8$ can be replaced with $2$.
\end{rem}

\end{section}

\begin{section}{Proof of main lemmas}\label{sec:technical}

The goal of this section is to prove Lemma~\ref{thm:hardoperators}. This will be achieved by translating the problem of decomposing the operator
$T$ as $T=T'\circ q^*+S$ into a problem of decomposing a sequence of measures $(\mu_n)_{n\ge1}$ as $\mu_n=\mu'_n+\nu_n$. First, we have to understand how to recognize, in terms of sequences of measures, which $c_0$-valued operators on $C(L)$ are of the form $T'\circ q^*$, for some $T':C(K)\to c_0$. This is the purpose of Lemma~\ref{thm:extensioncriterion}.

For the proof of Lemma~\ref{thm:extensioncriterion}, we need a more concrete representation of $M(K)$ when $K$ is a compact line. Given a compact line $K$ and a map $F:K\to\R$, the total variation $V(F)\in[0,+\infty]$ is defined exactly as in the case $K=[0,1]$. We denote by $\BV(K)$ the Banach space of functions $F:K\to\R$ of bounded variation
(i.e., functions $F$ with $V(F)<+\infty$) endowed with the norm:
\[\Vert F\Vert_{\BV}=\vert F(0)\vert+V(F).\]
Then:
\[\NBV(K)=\big\{F\in\BV(K):\text{$F$ is right-continuous}\big\}\]
is a closed subspace of $\BV(K)$. The following representation theorem is standard when $K=[0,1]$ and a proof in that case can be obtained, for instance,
by using Riemann--Stieltjes integration (\cite[4.32]{Taylor}). The proof in the case of a general compact line $K$ is similar. However, a little more care is
required in the definition of the Riemann--Stieltjes sum, because of the possible presence of consecutive points. We give the details below.
\begin{lem}\label{thm:MKNBVK}
Given a compact line $K$, the map:
\begin{equation}\label{eq:muFmu}
M(K)\ni\mu\longmapsto F_\mu\in\NBV(K)
\end{equation}
is a linear isometry, where $F_\mu$ is defined by $F_\mu(t)=\mu\big([0,t]\big)$, for all $t\in K$.
\end{lem}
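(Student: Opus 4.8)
The plan is to construct an explicit inverse of the map~\eqref{eq:muFmu} by Riemann--Stieltjes integration and then read off the isometry (and the surjectivity onto $\NBV(K)$) from norm estimates on the two maps; throughout I write $b$ for the maximum element of $K$, which exists since $K$ is compact. First I would check that $\mu\mapsto F_\mu$ is a well-defined linear map into $\NBV(K)$ with $\Vert F_\mu\Vert_{\BV}\le\Vert\mu\Vert$. Linearity is immediate. If $t$ is right-isolated there is nothing to verify for right-continuity; if $t$ is not right-isolated, then $[0,t]=\bigcap_{s>t}[0,s]$ is the intersection of the decreasing net of compacta $[0,s]$, $s>t$, so by outer regularity of $\vert\mu\vert$ together with compactness one gets $\vert\mu\vert\big([0,s]\setminus[0,t]\big)\to0$, i.e.\ $F_\mu(s)\to F_\mu(t)$ as $s\downarrow t$. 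For the variation, any finite chain $0=t_0<t_1<\dots<t_n=b$ gives
\[\sum_{i=1}^n\big\vert F_\mu(t_i)-F_\mu(t_{i-1})\big\vert=\sum_{i=1}^n\Big\vert\mu\big(\left]t_{i-1},t_i\right]\big)\Big\vert\le\vert\mu\vert\big(\left]0,b\right]\big),\]
since the intervals $\left]t_{i-1},t_i\right]$ are pairwise disjoint with union $K\setminus\{0\}$, whence $V(F_\mu)\le\vert\mu\vert\big(K\setminus\{0\}\big)$; as $\{0\}$ is an atom, $\vert F_\mu(0)\vert=\vert\mu(\{0\})\vert=\vert\mu\vert\big(\{0\}\big)$, and adding the two estimates yields $\Vert F_\mu\Vert_{\BV}\le\Vert\mu\Vert$. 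Call this bounded operator $\Phi\colon M(K)\to\NBV(K)$.

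For the inverse, given $F\in\NBV(K)$ and $g\in C(K)$ I would define the Riemann--Stieltjes integral $\int_K g\,\dd F$ as the limit, along the net of finite chains $0=t_0<\dots<t_n=b$ directed by refinement, of the sums $\sum_{i=1}^n g(\xi_i)\big(F(t_i)-F(t_{i-1})\big)$, where --- and this is the point requiring extra care because of consecutive points --- the tag $\xi_i$ is required to lie in $\left]t_{i-1},t_i\right]$ (so that when $t_{i-1},t_i$ are consecutive the only admissible tag is $t_i$, in accordance with the right-continuous normalization). The net of sums is Cauchy: refining a tagged partition changes its sum by at most $\big(\max_i\mathrm{osc}_i\big)\,V(F)$, where the $\mathrm{osc}_i$ are the oscillations of $g$ over the subintervals of the original partition, and by uniform continuity of $g$ (compactness of $K$) the partitions on whose subintervals $g$ oscillates by less than a prescribed $\varepsilon$ are cofinal. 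Then $g\mapsto g(0)F(0)+\int_K g\,\dd F$ is a linear functional on $C(K)$ of norm at most $\vert F(0)\vert+V(F)=\Vert F\Vert_{\BV}$, so by the Riesz representation theorem it is represented by some $\Psi(F)\in M(K)$, and $\Psi\colon\NBV(K)\to M(K)$ is linear with $\Vert\Psi(F)\Vert\le\Vert F\Vert_{\BV}$.

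It then remains to show that $\Phi$ and $\Psi$ are mutually inverse, for the assertion follows formally --- two mutually inverse bounded linear maps, each of norm at most $1$, being automatically isometric. That $\Psi\circ\Phi=\mathrm{id}$ is the easy half: for $g\in C(K)$ the Riemann--Stieltjes sums for $F_\mu$ are $\sum_i g(\xi_i)\mu\big(\left]t_{i-1},t_i\right]\big)$, i.e.\ the $\mu$-integral over $\left]0,b\right]$ of the simple function $\sum_i g(\xi_i)\chilow{]t_{i-1},t_i]}$, which converges uniformly on $\left]0,b\right]$ to $g$ along a cofinal family of refinements; hence $\int_K g\,\dd F_\mu$, being the limit of these sums, equals $\int_{\left]0,b\right]}g\,\dd\mu$, so $\Psi(F_\mu)$ represents $g\mapsto g(0)\mu(\{0\})+\int_{\left]0,b\right]}g\,\dd\mu=\int_K g\,\dd\mu$, i.e.\ $\Psi(F_\mu)=\mu$. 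The harder half is $\Phi\circ\Psi=\mathrm{id}$, i.e.\ $\Psi(F)\big([0,t]\big)=F(t)$ for every $t$. When $t$ is right-isolated, $\chilow{[0,t]}\in C(K)$ and a direct evaluation of the Riemann--Stieltjes sums over chains passing through $t$ and its successor, using the tag convention, gives $\int_K\chilow{[0,t]}\,\dd F=F(t)-F(0)$, so $\Psi(F)\big([0,t]\big)=F(0)+\big(F(t)-F(0)\big)=F(t)$. For the remaining $t$, given $r>t$ I would apply Urysohn's lemma to the disjoint compacta $[0,t]$ and $\{x\in K:x\ge r\}$ to obtain $g\in C(K)$ with $0\le g\le1$, $g\equiv1$ on $[0,t]$ and $g\equiv0$ on $\{x\ge r\}$; by additivity of the integral over subintervals, $\int_K g\,\dd F$ differs from $F(t)-F(0)$ by at most the variation of $F$ on $[t,r]$, while $\int_K g\,\dd\Psi(F)$ differs from $\Psi(F)\big([0,t]\big)$ by at most $\big\vert\Psi(F)\big\vert\big(\left]t,r\right]\big)$, so $\big\vert\Psi(F)\big([0,t]\big)-F(t)\big\vert$ is at most the sum of these two quantities; letting $r\downarrow t$, the first tends to $0$ by right-continuity of the variation function $s\mapsto V\big(F|_{[0,s]}\big)$ and the second by outer regularity of $\Psi(F)$ together with compactness of $[0,t]$. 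This proves $\Phi\circ\Psi=\mathrm{id}$ and finishes the proof.

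I expect the two delicate points to be the definition of the Riemann--Stieltjes sum in the presence of consecutive points --- it must be set up so that the resulting integral is additive over half-open subintervals and records the jumps of $F$ correctly --- and the verification of $\Phi\circ\Psi=\mathrm{id}$ at a non-right-isolated $t$, where one has to control the constructed measure $\Psi(F)$ on small intervals to the right of $t$, which is precisely where regularity of $\Psi(F)$ and the right-continuity of the variation function of $F$ enter.
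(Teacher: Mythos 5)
Your proposal is correct and follows essentially the same route as the paper: both construct the inverse map via a Riemann--Stieltjes integral whose sums are tagged so as to respect consecutive points and the right-continuous normalization, and both deduce the isometry from the two norm inequalities $\Vert F_\mu\Vert_{\BV}\le\Vert\mu\Vert$ and $\Vert\mu_F\Vert\le\Vert F\Vert_{\BV}$ once the maps are shown to be mutually inverse. The only differences are cosmetic (the paper fixes the tag to be the right endpoint, and leaves as ``standard arguments'' the verification of mutual inverseness that you carry out in detail).
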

\begin{proof}
Given $\mu\in M(K)$, we have that $\Vert F_\mu\Vert_{\BV}\le\Vert\mu\Vert$ and it follows from the regularity of $\mu$ that
$F_\mu$ is right-continuous. Using Riemann--Stieltjes integration, an inverse for \eqref{eq:muFmu} can be defined. More precisely,
for $f\in C(K)$ and $F\in\NBV(K)$, we define the Riemann--Stieltjes sum $\mathcal S(f,F;P)$ by setting:
\[\mathcal S(f,F;P)=f(0)F(0)+\sum_{i=0}^{n-1}f(t_{i+1})\big(F(t_{i+1})-F(t_i)\big),\]
where $P:0=t_0<t_1<\cdots<t_n=\max K$ is a partition of $K$. It is easy to prove that, for $f\in C(K)$ and $\varepsilon>0$, there
exists a partition $P$ of $K$ such that $f$ oscillates less than $\varepsilon$ on each interval $[t_i,t_{i+1}]$ of $P$ with
$\left]t_i,t_{i+1}\right[\ne\emptyset$. Using this fact, it follows that the Riemann--Stieltjes
integral is well-defined by $\int_Kf\,\dd F=\lim_P\mathcal S(f,F;P)$, with partitions $P$ of $K$ ordered by inclusion.
For $F\in\NBV(K)$, the bounded linear functional $f\mapsto\int_Kf\,\dd F$ on $C(K)$ is represented by a measure $\mu_F\in M(K)$ with
$\Vert\mu_F\Vert\le\Vert F\Vert_{\BV}$. By standard arguments, one shows that the operator $\NBV(K)\ni F\mapsto\mu_F\in M(K)$ is the inverse of \eqref{eq:muFmu}.
\end{proof}

\begin{lem}\label{thm:extensioncriterion}
Let $K$ and $L$ be compact lines and $q:K\to L$ be a continuous increasing surjection. Assume that $K$ is zero-dimensional. Set:
\[Q=\big\{t\in L:\vert q^{-1}(t)\vert>1\big\},\]
where $\vert\cdot\vert$ denotes the cardinality of a set.
Let $T:C(L)\to c_0$ be a bounded operator associated with a weak*-null sequence $(\mu_n)_{n\ge1}$ in $M(L)$. The following conditions are equivalent:
\begin{itemize}
\item[(a)] there exists a bounded operator $T':C(K)\to c_0$ with $T'\circ q^*=T$;
\item[(b)] there exists a countable subset $E$ of $Q$ such that $\mu_n\big([0,t]\big)\longrightarrow0$, for all $t\in Q\setminus E$;
\item[(c)] there exists a bounded operator $T':C(K)\to c_0$ with $T'\circ q^*=T$ and $\Vert T'\Vert\le2\Vert T\Vert$.
\end{itemize}
\end{lem}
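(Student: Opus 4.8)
The plan is to prove the cycle of implications $(c)\Rightarrow(a)\Rightarrow(b)\Rightarrow(c)$, the first being trivial. The geometry behind the statement is that, since $K$ is zero-dimensional, $q^*C(L)$ sits inside $C(K)$ and we must decide when a functional on $q^*C(L)$ — i.e.\ an element of $M(L)$ pushed forward — extends coordinatewise to a weak*-null sequence in $M(K)$. The single obstruction is the ``jumps'' of the distribution functions $F_{\mu_n}$ at points of $Q$: for $t\notin Q$ the fibre $q^{-1}(t)$ is a singleton and $\mu_n([0,t])$ is essentially forced to converge to whatever the analogous quantity does upstairs, but over a point $t\in Q$ with a nondegenerate fibre there is a genuine choice of how to distribute mass, and this is exactly the freedom one needs (or fails to have).

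For $(a)\Rightarrow(b)$: suppose $T'$ exists, associated with a weak*-null sequence $(\mu'_n)_{n\ge1}$ in $M(K)$. The relation $T'\circ q^*=T$ says $\int_L f\,\dd\mu_n=\int_K (f\circ q)\,\dd\mu'_n$ for all $f\in C(L)$, equivalently $q_*\mu'_n=\mu_n$. Using the representation of Lemma~\ref{thm:MKNBVK}, pick for each $t\in L$ a right-isolated (in $K$) point $s_t$ ``just above'' $q^{-1}(t)$ — more precisely the maximum of the fibre when it is right-isolated, or a point of the fibre's closure realizing $\mu'_n([0,\cdot])$ appropriately — so that $\mu'_n([0,s_t])$ approximates $\mu_n([0,t])=F_{\mu_n}(t)$ up to the right-continuity correction. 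The key point is that $\chilow{[0,s]}\in C(K)$ for $s$ right-isolated, so $\mu'_n([0,s_t])\to 0$ for each such $t$ because $(\mu'_n)$ is weak*-null. One then has to argue that the set $E$ of $t\in Q$ for which $F_{\mu_n}(t)\not\to 0$ is countable: outside a countable set the fibre $q^{-1}(t)$ is ``clopen-detectable'' (both endpoints right-isolated) since $K$ is zero-dimensional and $L$ metrizable, and for such $t$ the value $F_{\mu_n}(t)$ is literally of the form $\mu'_n([0,s])$ for a right-isolated $s\in K$, hence tends to $0$. The countably many exceptional $t$ are swept into $E$.

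For $(b)\Rightarrow(c)$: given the countable exceptional set $E=\{t_1,t_2,\dots\}\subset Q$, I would build $T'$ by defining $\mu'_n\in M(K)$ explicitly. For each $k$ choose a point $p_k\in q^{-1}(t_k)$ that is \emph{not} right-isolated if possible (or handle the right-isolated case separately), and decompose $\mu_n$ as the part living ``strictly between successive $t_k$'s'' plus atoms at the $t_k$. Concretely, transport $\mu_n$ to $K$ via a fixed right-inverse section of $q$ on $L\setminus E$ (which exists and can be taken Borel measurable, sending $t$ to the unique point of $q^{-1}(t)$ off a countable set), and place the residual mass $F_{\mu_n}(t_k^+)-F_{\mu_n}(t_k^-)$-type increments as atoms at chosen points $p_k$ of the fibres over $E$. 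This produces $\mu'_n$ with $q_*\mu'_n=\mu_n$ and $\Vert\mu'_n\Vert\le 2\Vert\mu_n\Vert$ (the factor $2$ coming from splitting each measure into a ``continuous'' piece pushed through the section, of norm $\le\Vert\mu_n\Vert$, and a ``jump'' piece of norm $\le\Vert\mu_n\Vert$). Weak*-nullness of $(\mu'_n)$ is checked on the dense set $\{\chilow{[0,s]}:s\text{ right-isolated in }K\}$ from Lemma~\ref{thm:lemaCL}'s proof: for such $s$, $\mu'_n([0,s])$ is a finite combination of the surviving quantities $F_{\mu_n}(q(s))$ (which $\to 0$ by weak*-nullness of $(\mu_n)$ when $q(s)\notin E$, and by hypothesis (b) when $q(s)\in Q\setminus E$) together with finitely many jump terms at the $t_k\le q(s)$, each of which $\to 0$ since $(\mu_n)$ is weak*-null and the $t_k$ are right-isolated-detectable in $L$ or at least the relevant increments tend to $0$. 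A diagonal/uniform-boundedness argument closes the verification. The main obstacle is precisely this last bookkeeping: controlling the mass placed over the (at most countable, but possibly infinite) set $E$ so that the resulting sequence stays weak*-null and bounded by $2\Vert T\Vert$ simultaneously; getting the constant $2$ rather than something worse requires the clean ``continuous part $+$ jump part'' split rather than a naive section.
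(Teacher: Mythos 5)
Your reduction to $(c)\Rightarrow(a)\Rightarrow(b)\Rightarrow(c)$ and the identification of $q_*\mu'_n=\mu_n$ as the relevant constraint match the paper, but both nontrivial implications have genuine gaps. In $(a)\Rightarrow(b)$, you derive countability of the exceptional set from the claim that, outside a countable set of $t$, the fibre $q^{-1}(t)$ is ``clopen-detectable'' (both endpoints right-isolated), citing metrizability of $L$ --- which is not a hypothesis of this lemma. The claim is false even when $L$ is metrizable: for $K=\DA(Q)$, $L=[0,1]$, $q=\pi_1$, the top point $(t,1)$ of the fibre over each $t\in Q$ has no successor, so uncountably many fibres fail your condition; this is precisely the setting of Lemma~\ref{thm:DAQ}, where extension genuinely fails. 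The paper's countability mechanism is different and is the missing idea: zero-dimensionality of $K$ yields a right-isolated point $a_t$ \emph{inside} each nondegenerate fibre, the intervals $\left]a_t,b_t\right]$ (with $b_t=\max q^{-1}(t)$) are pairwise disjoint, and a finite measure $\mu'_n$ can charge only countably many pairwise disjoint sets; hence $F_n(t)=F'_n(b_t)=F'_n(a_t)=\mu'_n\big([0,a_t]\big)\to0$ off a countable set, using that $[0,a_t]$ is clopen.

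In $(b)\Rightarrow(c)$, the ``bookkeeping'' you defer is the entire difficulty, and your proposed section-plus-atoms construction does not close it. Pushing $\mu_n$ through the section $t\mapsto\min q^{-1}(t)$ (equivalently, taking the measure $\nu_n$ with distribution function $F_n\circ q$) already gives $q_*\nu_n=\mu_n$, $\Vert\nu_n\Vert=\Vert\mu_n\Vert$ and $\nu_n\big([0,s]\big)=F_n\big(q(s)\big)\to0$ for every right-isolated $s\notin q^{-1}[E]$; the obstruction lives entirely over $E$. There, for $s$ right-isolated in $q^{-1}(t)$ with $t\in E$, the constraint $q_*\mu'_n=\mu_n$ forces $\mu'_n\big([0,s]\big)=F_n(t^-)+\mu'_n\big([\min q^{-1}(t),s]\big)$, and neither $F_n(t)$ nor $F_n(t^-)$ need tend to $0$; so each such fibre requires a correction whose mass does not vanish as $n\to\infty$, and summing corrections over a countably infinite $E$ is incompatible with the uniform bound $\Vert\mu'_n\Vert\le2\Vert\mu_n\Vert$, while truncating them breaks weak*-nullness. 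The paper sidesteps the explicit construction: it takes the $\ell_\infty$-valued operator $S$ associated with $(\nu_n)_{n\ge1}$, notes $S\circ q^*=T$, shows $C(K)/S^{-1}[c_0]$ is separable (since $\chilow{[0,s]}\in S^{-1}[c_0]$ for right-isolated $s\notin q^{-1}[E]$ and $\chilow{[0,s_1]}-\chilow{[0,s_2]}\in\Ker(S)$ for $s_1,s_2$ in a common fibre), and invokes the Sobczyk-type extension result \cite[Proposition~2.2,~(a)]{CT} to extend $S\vert_{S^{-1}[c_0]}$ to a $c_0$-valued $T'$ with $\Vert T'\Vert\le2\Vert S\Vert=2\Vert T\Vert$. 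That non-constructive step is what produces both the admissible redistribution of mass over $E$ and the constant $2$; without it, or a substitute for it, your outline does not establish (c).
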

\begin{proof}
Assume (a) and let us prove (b). Let $T'$ be associated with a weak*-null sequence $(\mu'_n)_{n\ge1}$ in $M(K)$.
Let $F_n\in\NBV(L)$ and $F'_n\in\NBV(K)$ correspond, respectively, to $\mu_n$ and $\mu'_n$ as in Lemma~\ref{thm:MKNBVK}.
From $T'\circ q^*=T$ we obtain $q_*(\mu'_n)=\mu_n$, for all $n\ge1$, where $q_*:M(K)\to M(L)$ denotes the adjoint of $q^*$. The operator $q_*$ is given by:
\[q_*(\mu')(B)=\mu'\big(q^{-1}[B]\big),\quad\mu'\in M(K),\]
for every Borel subset $B$ of $L$. It follows that:
\[F_n(t)=F'_n(b_t),\]
for all $t\in L$, where $b_t=\max\big(q^{-1}(t)\big)$. Since $K$ is zero-dimensional, for each $t\in Q$, there exists a right-isolated point
$a_t$ of $K$ in $q^{-1}(t)$. For all $t\in Q$, we have $F'_n(b_t)-F'_n(a_t)=\mu'_n\big(\left]a_t,b_t\right]\!\big)$ and, since the intervals
$\left]a_t,b_t\right]$ are disjoint, it follows that $F'_n(b_t)-F'_n(a_t)=0$, for all $t\in Q$ outside a countable subset $E_n$ of $Q$.
Set $E=\bigcup_{n=1}^\infty E_n$. For $t\in Q\setminus E$, we have $F_n(t)=F'_n(a_t)\longrightarrow0$, since $[0,a_t]$ is a clopen subset
of $K$ and $(\mu'_n)_{n\ge1}$ is weak*-null.

Now assume (b) and let us prove (c). Define $F_n$ and $b_t$ as above. For each $n\ge1$, set $G_n=F_n\circ q$. Then $G_n\in\NBV(K)$ and
$\Vert G_n\Vert_{\BV}=\Vert F_n\Vert_{\BV}$, so that $G_n$ corresponds to a measure $\nu_n\in M(K)$ with $\Vert\nu_n\Vert=\Vert\mu_n\Vert$
and the operator $S:C(K)\to\ell_\infty$ associated with $(\nu_n)_{n\ge1}$ satisfies $\Vert S\Vert=\Vert T\Vert$.
Moreover, since $F_n(t)=G_n(b_t)$, for all $t\in L$, we have $S\circ q^*=T$.
We will show that the quotient $C(K)/S^{-1}[c_0]$ is separable and it will follow from \cite[Proposition~2.2,~(a)]{CT} that $S\vert_{S^{-1}[c_0]}$ admits an extension $T':C(K)\to c_0$ with $\Vert T'\Vert\le2\Vert S\Vert$. Since $K$ is zero-dimensional, the set:
\begin{equation}\label{eq:chi0s}
\big\{\chilow{[0,s]}:\text{$s$ right-isolated in $K$}\big\}
\end{equation}
spans a dense subspace of $C(K)$. Let us prove that the image of \eqref{eq:chi0s} under the quotient map $C(K)\to C(K)/S^{-1}[c_0]$ is countable.
We claim that if $s\in K$ is right-isolated and $s\not\in q^{-1}[E]$ then $\chilow{[0,s]}\in S^{-1}[c_0]$. To prove the claim, note first that $S(\chilow{[0,s]})=\big(G_n(s)\big)_{n\ge1}$. In case $q(s)\in Q$, we have $q(s)\in Q\setminus E$ and therefore:
\begin{equation}\label{eq:GnFnqs}
G_n(s)=F_n\big(q(s)\big)\longrightarrow0.
\end{equation}
In case $q(s)\not\in Q$, since $s$ is right-isolated in $K$, we have that $q(s)$ is right-isolated in $L$ and then \eqref{eq:GnFnqs} follows from the fact that $(\mu_n)_{n\ge1}$
is weak*-null. This proves the claim. To conclude the proof of (c), simply observe that for $t\in E$ and $s_1,s_2\in q^{-1}(t)$ right-isolated in $K$,
we have:
\[\chilow{[0,s_1]}-\chilow{[0,s_2]}\in\Ker(S)\subset S^{-1}{[c_0]}.\qedhere\]
\end{proof}

\begin{rem}\label{thm:gambiarra}
In the proof of (a)$\Rightarrow$(b) in Lemma~\ref{thm:extensioncriterion}, the assumption that $K$ is zero-dimensional is only used to establish
that $q^{-1}(t)$ contains a right-isolated point of $K$, for all $t\in Q$.
\end{rem}

We are now ready to state the measure-theoretic version of Lemma~\ref{thm:hardoperators}.
\begin{lem}\label{thm:hardmeasures}
Let $L$ be a metrizable zero-dimensional compact line, $X$ be a Banach space with separable dual, $R:X\to C(L)$ be a bounded operator and
$(\mu_n)_{n\ge1}$ be a weak*-null sequence in $M(L)$ with $\sup_{n\ge1}\Vert\mu_n\Vert\le1$. Given $\varepsilon,\varepsilon'>0$, there exist
weak*-null sequences $(\mu'_n)_{n\ge1}$ and $(\nu_n)_{n\ge1}$ in $M(L)$ such that:
\begin{itemize}
\item[(a)] $\mu_n=\mu'_n+\nu_n$, for all $n\ge1$;
\item[(b)] $\Vert\nu_n\Vert\le1+\varepsilon'$ and $\Vert\mu'_n\Vert\le2+\varepsilon'$, for all $n\ge1$;
\item[(c)] $\Vert R^*(\nu_n)\Vert\le\varepsilon$, for all $n\ge1$;
\item[(d)] $\mu'_n\big([0,t]\big)\longrightarrow0$, for all $t$ outside a countable set.
\end{itemize}
\end{lem}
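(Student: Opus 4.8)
My plan is to translate the statement, via Lemma~\ref{thm:MKNBVK}, into the language of distribution functions, and then to repair the failure of condition~(d) by a correction $\nu_n$ localized at fine clopen scales, controlling its total variation by exploiting the separability of $X^*$.

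\emph{Reformulating (d).} Write $F_n\in\NBV(L)$ for the distribution function of $\mu_n$, so that $\|F_n\|_{\NBV}\le 1$, and $F'_n$ for that of $\mu'_n$. Since the $\mu'_n$ to be built will be weak*-null, $F'_n(s)\to 0$ for every right-isolated $s$, because $[0,s]$ is then clopen; hence only non-right-isolated $t$ can be exceptional. For such a $t$ pick a right-isolated $s>t$ and write $\mu'_n([0,t])=\mu'_n([0,s])-\mu'_n(\,]t,s]\,)$; the first term tends to $0$, so $\limsup_n|F'_n(t)|=\limsup_n|\mu'_n(\,]t,s]\,)|$. Thus (d) says exactly that $\mu'_n$ carries \emph{persistent mass immediately to the right} of $t$ for only countably many $t$, and the whole task is to annihilate that persistent right-mass while respecting (a)--(c).

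\emph{The frame and the construction.} I would fix an increasing sequence $\mathcal P_1\le\mathcal P_2\le\cdots$ of finite partitions of $L$ into clopen intervals that generate the topology and satisfy $\bigcap_j P_j(t)=\{t\}$ for every $t$, where $P_j(t)$ is the piece of $\mathcal P_j$ containing $t$; this is possible since $L$ is a metrizable zero-dimensional compact line, and the endpoints of the intervals involved are right-isolated. Using that $X^*$ is separable, fix $(x_k)_{k\ge 1}$ dense in $B_X$ and set $g_k=Rx_k$, so that $\|R^*(\nu)\|=\sup_k\bigl|\int_L g_k\,\dd\nu\bigr|$ for $\nu\in M(L)$; each $g_k$ is uniformly continuous, so its oscillation over the pieces of $\mathcal P_j$ tends to $0$ as $j\to\infty$, and since $\mu_n$ is weak*-null, $\int_L g_k\,\dd\mu_n\to 0$ for each fixed $k$. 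The correction would then be built by choosing, for each $n$, a level $j(n)\to\infty$ that is large relative to $\mu_n$, and letting $\nu_n$ be supported inside the pieces of $\mathcal P_{j(n)}$, transporting within each piece the part of $\mu_n$ accounting for the upward swing of $F_n$ across that piece out to its right-isolated right endpoint, where it meets the next piece and --- chained with the corrections of the neighbouring pieces --- nearly cancels. Because $\nu_n$ is carried by the pieces of $\mathcal P_{j(n)}$, one has $\bigl|\int g_k\,\dd\nu_n\bigr|\le(\text{oscillation of }g_k\text{ over }\mathcal P_{j(n)})\,\|\nu_n\|\le\varepsilon$ for all but finitely many $k$, and for the finitely many remaining $k$ one uses $\int g_k\,\dd\mu_n\to 0$ together with the fact that only a few pieces get transported; this gives (c). At the same time $F'_n=F_n-F_{\nu_n}$ is constant on each piece of $\mathcal P_{j(n)}$ up to the $\mu_n$-mass of a shrinking clopen piece, which should confine the points with $\limsup_n|F'_n(t)|>0$ to a countable set and yield (d); the second estimate in (b) follows from the first via $\|\mu'_n\|\le\|\mu_n\|+\|\nu_n\|$.

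\emph{The main obstacle.} The genuinely hard point, which I expect to account for the length of the argument, is the bound $\|\nu_n\|\le 1+\varepsilon'$ together with (c). Transporting mass across a clopen piece naively costs twice that mass in variation, since $F_{\nu_n}$ must swing down and then up, so a blanket transport gives only $\|\nu_n\|\le 2\|\mu_n\|$. To reach the factor $1$ one must transport only on the finitely many pieces of $\mathcal P_{j(n)}$ carrying appreciable variation of $\mu_n$ --- leaving the rest essentially untouched and absorbing the few extra exceptional points into the countable set of (d) --- and chain the transports across consecutive pieces so that the cost telescopes, with $j(n)$ taken large enough that the endpoint losses, which are $\mu_n$-masses of shrinking clopen sets, drop below $\varepsilon'$. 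Making this compatible with (c) at the same time is precisely where the hypothesis that $X^*$ be separable is essential: it is what keeps $R(B_X)$ from resolving the supports of the $\mu_n$ finely enough to force $\|R^*(\nu_n)\|$ large while the transport stays cheap. Carrying out these estimates carefully is the bulk of what follows.
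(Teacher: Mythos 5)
There is a genuine gap, and it sits exactly where you predicted the difficulty would be: in obtaining (c). Your mechanism for (c) is to take $(x_k)$ dense in $B_X$, set $g_k=Rx_k$, and argue that the oscillation of $g_k$ over the pieces of $\mathcal P_{j(n)}$ is $\le\varepsilon$ ``for all but finitely many $k$'' once $j(n)$ is large. This is false in general: the oscillation of $g_k$ over a \emph{fixed} partition tends to $0$ as $j\to\infty$ only for each fixed $k$, and uniformity over $k$ would require $R[B_X]$ to be equicontinuous, i.e.\ $R$ compact. For a non-compact $R$ (e.g.\ an isomorphic embedding of $c_0$, which has separable dual) and any fixed level $j$, infinitely many $g_k$ can oscillate by a fixed amount inside a single piece, so the ``finitely many remaining $k$'' are in fact infinitely many; and the fallback $\int g_k\,\dd\mu_n\to0$ does not control $\int g_k\,\dd\nu_n$ anyway, since $\nu_n\ne\mu_n$. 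There is also a structural reason the argument cannot be repaired as written: it invokes only the separability of $X$ (through density of $(x_k)$ in $B_X$), never of $X^*$, and the lemma is \emph{false} under that weaker hypothesis --- fed into the rest of the paper it would give the separable \EP\ for $C(K)$ for every compact line $K$, contradicting Theorem~\ref{thm:monolithic} and Lemma~\ref{thm:DAQ} for the double arrow space. Your closing sentence asserting that separability of $X^*$ ``keeps $R(B_X)$ from resolving the supports'' is the missing idea, not a proof of it.

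What the paper actually uses in place of equicontinuity is weak*-fragmentability of $X^*$ (equivalent to $X$ Asplund, hence implied by separability of $X^*$): one builds a transfinite decreasing hierarchy $H_\alpha$ of closed subsets of $L$ along which the \emph{norm}-diameter of $\phi^R(p)=R^*(\delta_p)$ drops below $\delta$ (Lemma~\ref{thm:Halphas}), assigns to each clopen piece $I$ the first level $\alpha(I)$ at which $\diam\big(\phi^R[H_{\alpha(I)}\cap I]\big)<\delta$, and, after normalizing $\tilde\mu_n(I)=0$ on each piece at cost $\varepsilon'$ (Lemma~\ref{thm:lematildemu}, using weak*-nullity), replaces $\tilde\mu_n\vert_I$ by a measure $\nu_n^I$ \emph{supported on} $H_{\alpha(I)}\cap I$ with the same mass, no larger norm, and the same distribution function on $H_{\alpha(I)}\cap I$ minus its maximum (Lemma~\ref{thm:skeleton}). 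Then Lemma~\ref{thm:flower} --- $\Vert R^*(\nu)\Vert\le\frac12\diam\big(\phi^R[\supp\nu]\big)\Vert\nu\Vert$ for zero-mass $\nu$ --- yields (c); the norm bound in (b) is immediate from Lemma~\ref{thm:skeleton}(C) (so the factor-$2$/telescoping worry you raise does not arise); and the countable exceptional set in (d) is $\big\{\max\big(H_{\alpha(I)}\cap I\big)\big\}$ over all pieces of all partitions, with countability coming from the hierarchy, not from the right-isolated points. None of the hierarchy, the norm-fragmentation of $\phi^R$, or the support-shrinking step appears in your proposal, and these are precisely the ingredients that make (c), (b) and the countability in (d) compatible.
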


We now prove Lemma~\ref{thm:hardoperators} using Lemma~\ref{thm:hardmeasures}.
\begin{proof}[Proof of Lemma~\ref{thm:hardoperators}]
Without loss of generality, assume $\Vert T\Vert=1$. Denote by $(\mu_n)_{n\ge1}$ the weak*-null sequence in $M(L)$ associated with $T$.
Using Lemma~\ref{thm:hardmeasures} we obtain weak*-null sequences $(\mu'_n)_{n\ge1}$ and $(\nu_n)_{n\ge1}$ in $M(L)$ satisfying (a)---(d). Let $T'_0:C(L)\to c_0$ and $S:C(L)\to c_0$ be the bounded operators associated with $(\mu'_n)_{n\ge1}$ and $(\nu_n)_{n\ge1}$, respectively. By item~(a),
$T=T'_0+S$. Item~(b) implies:
\[\Vert T'_0\Vert\le2+\varepsilon',\quad\Vert S\Vert\le1+\varepsilon',\]
and item~(c) gives $\Vert S\circ R\Vert\le\varepsilon$.
Finally, Lemma~\ref{thm:extensioncriterion} and (d) yield a bounded operator $T':C(K)\to c_0$ such that $\Vert T'\Vert\le2\Vert T'_0\Vert$
and $T'\circ q^*=T'_0$.
\end{proof}

The proof of Lemma~\ref{thm:hardmeasures} is hard and it requires a few more lemmas. Let us first introduce some notation.

\medskip

Given a Banach space $X$, a compact Hausdorff space $L$ and a bounded operator $R:X\to C(L)$, we denote by $\phi^R:L\to X^*$
the map defined by:
\[\phi^R(p)=R^*(\delta_p),\quad p\in L.\]
Obviously, the map $\phi^R$ is continuous when $X^*$ is endowed with the weak*-topology and:
\[\Vert R\Vert=\sup_{p\in L}\Vert\phi^R(p)\Vert.\]

Recall that a Banach space $X$ is said to be {\em weak*-fragmentable\/} if for every nonempty bounded subset $B$ of $X^*$ and every $\varepsilon>0$, there exists a nonempty set $U$ weak*-open relatively to $B$ with $\diam(U)<\varepsilon$, where $\diam(U)$ denotes the diameter of $U$. It is well-known that $X$ is weak*-fragmentable if and only if $X$ is Asplund; in particular, if $X$ has separable dual then $X$ is weak*-fragmentable (see \cite[Theorem~11.8]{Fabian}).
\begin{lem}\label{thm:Halphas}
Let $X$ be a Banach space, $L$ be a compact Hausdorff space and $R:X\to C(L)$ be a bounded operator. Given $\delta>0$, define
by recursion on the ordinal $\alpha$ a decreasing family of closed subsets $H_\alpha$ of $L$ by setting $H_0=L$,
\[H_{\alpha+1}=\big\{p\in H_\alpha:\text{$\diam\big(\phi^R[V]\big)\ge\delta$, for every nhood $V$ of $p$ in $H_\alpha$}\big\},\]
and $H_\alpha=\bigcap_{\beta<\alpha}H_\beta$ if $\alpha$ is a limit ordinal. If $X$ is weak*-fragmentable, then there exists $\alpha$ such that $H_\alpha=\emptyset$.
\end{lem}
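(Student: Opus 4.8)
The plan is to argue by contradiction: suppose $H_\alpha \neq \emptyset$ for all ordinals $\alpha$. Since the $H_\alpha$ form a decreasing transfinite chain of closed subsets of the compact space $L$, for cardinality reasons this chain must eventually stabilize; that is, there is an ordinal $\gamma$ with $H_{\gamma+1} = H_\gamma$ (otherwise we would have an injection of the class of all ordinals into the power set of $L$). Set $H = H_\gamma$, so $H$ is a nonempty closed subset of $L$ with the property that $H_{\gamma+1} = H$, which unwinds to: \emph{every} point $p \in H$ has the property that every neighborhood $V$ of $p$ in $H$ satisfies $\diam(\phi^R[V]) \ge \delta$. In other words, $\phi^R$ is nowhere "$\delta$-oscillation-small" on $H$.

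Next I would bring in weak*-fragmentability. The set $B = \phi^R[H] \subseteq X^*$ is nonempty and bounded (bounded by $\Vert R\Vert$). Apply weak*-fragmentability with the given $\delta$: there is a nonempty relatively weak*-open subset $U$ of $B$ with $\diam(U) < \delta$. Pull this back along $\phi^R$: since $\phi^R: H \to (X^*,\mathrm{w}^*)$ is continuous, the set $W = (\phi^R|_H)^{-1}[U]$ is a nonempty relatively open subset of $H$. Pick any $p \in W$; then $W$ is a neighborhood of $p$ in $H$, and $\phi^R[W] \subseteq U$, so $\diam(\phi^R[W]) \le \diam(U) < \delta$. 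This directly contradicts the defining property of $H = H_{\gamma+1}$ established in the previous paragraph. Hence the assumption was false, and $H_\alpha = \emptyset$ for some $\alpha$.

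The only point requiring a little care — and the step I expect to be the main (minor) obstacle — is the stabilization argument: one must observe that the strictly decreasing part of the chain $(H_\alpha)$ cannot be too long. Concretely, if $H_{\alpha+1} \subsetneq H_\alpha$ for all $\alpha < \lambda$, then choosing $p_\alpha \in H_\alpha \setminus H_{\alpha+1}$ gives distinct points indexed by $\alpha < \lambda$, forcing $\lambda \le |L|^+$; so the chain stabilizes at some $\gamma \le |L|^+$. Combined with the (possibly vacuous) passage through limit stages, this gives the ordinal $\gamma$ needed above. Everything else is a routine unwinding of the recursive definition of $H_{\alpha+1}$ together with the continuity of $\phi^R$ and the definition of weak*-fragmentability; the fact that separable-dual Banach spaces are weak*-fragmentable (Asplund) is quoted from \cite[Theorem~11.8]{Fabian} and is exactly what licenses applying the lemma in the separable-dual setting where it will later be used.
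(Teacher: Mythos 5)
Your proof is correct and uses the same key step as the paper: applying weak*-fragmentability to the bounded set $\phi^R[H]$ and pulling the small weak*-open set back along the continuous map $\phi^R$. The paper merely organizes it directly --- showing $H_{\alpha+1}\subsetneq H_\alpha$ whenever $H_\alpha\ne\emptyset$, so the chain cannot be strictly decreasing through all ordinals --- whereas you phrase the same cardinality argument as a contradiction via stabilization; the two are equivalent.
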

\begin{proof}
The conclusion will follow from the fact that, given an ordinal $\alpha$ with $H_\alpha\ne\emptyset$, the set $H_{\alpha+1}$ is properly contained
in $H_\alpha$. To prove the latter, note that $\phi^R[H_\alpha]$ is a nonempty bounded subset of $X^*$ and thus there exists a nonempty set $U$ weak*-open
relatively to $\phi^R[H_\alpha]$ with $\diam(U)<\delta$. Then $(\phi^R\vert_{H_\alpha})^{-1}[U]$ is a nonempty set, open relatively to $H_\alpha$,
and it is disjoint from $H_{\alpha+1}$.
\end{proof}

\begin{rem}
We note that the hierarchy of closed subsets $H_\alpha$ of $L$ defined in the statement of Lemma~\ref{thm:Halphas} coincides precisely with the oscillation hierarchy $P^\alpha_{\delta,\phi^R}$ of the map $\phi^R:L\to X^*$ defined in \cite[1.II]{KL} and that the least ordinal $\alpha$ with $H_\alpha=\emptyset$
is the $\delta$-oscillation rank $\beta(\phi^R,\delta)$ of the map $\phi^R$ defined in that article. Moreover, it is easily checked that
such $\alpha$ is bounded by the Szlenk index of the Banach space $X$. See \cite{Szlenk} for Szlenk's original definition of the index,
\cite{LancienSurvey} for a survey, and \cite[Proposition~3.3]{Lancien} for the equivalence (in the case of spaces not containing $\ell_1$)
between Szlenk's original definition and the definition appearing in \cite{LancienSurvey}.
We observe that the Szlenk index of a Banach space is well-defined if and only if
the space is Asplund (\cite[Theorem~2]{LancienSurvey}) and it is countable if the space has separable dual (\cite[Theorem~1]{LancienSurvey}).
\end{rem}

\begin{lem}\label{thm:flower}
Let $X$ be a Banach space, $L$ be a compact Hausdorff space and $R:X\to C(L)$ be a bounded operator. If $\mu\in M(L)$ satisfies $\mu(L)=0$ then:
\[\Vert R^*(\mu)\Vert\le\frac12\diam\big(\phi^R[\supp\mu]\big)\Vert\mu\Vert.\]
\end{lem}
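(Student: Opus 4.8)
The plan is to reduce the norm $\Vert R^*(\mu)\Vert = \sup_{x\in B_X}\vert\langle R^*(\mu),x\rangle\vert = \sup_{x\in B_X}\bigl\vert\int_L (Rx)\,\dd\mu\bigr\vert$ to an estimate involving the oscillation of the integrand $Rx$ over $\supp\mu$. The key observation is that for $x\in B_X$ and $p\in\supp\mu$ we have $(Rx)(p) = \langle\delta_p, Rx\rangle = \langle R^*(\delta_p),x\rangle = \langle\phi^R(p),x\rangle$; hence the oscillation of the function $Rx$ on $\supp\mu$ is controlled: for $p,p'\in\supp\mu$,
\[\vert (Rx)(p) - (Rx)(p')\vert = \vert\langle\phi^R(p)-\phi^R(p'),x\rangle\vert \le \Vert\phi^R(p)-\phi^R(p')\Vert \le \diam\bigl(\phi^R[\supp\mu]\bigr).\]

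First I would use the hypothesis $\mu(L)=0$ to insert a free constant: for any $c\in\R$,
\[\int_L (Rx)\,\dd\mu = \int_{\supp\mu}(Rx)\,\dd\mu = \int_{\supp\mu}\bigl((Rx)-c\bigr)\,\dd\mu,\]
since $\int_{\supp\mu} c\,\dd\mu = c\,\mu(\supp\mu) = c\,\mu(L) = 0$ (as $\mu$ vanishes off its support). Then
\[\Bigl\vert\int_L (Rx)\,\dd\mu\Bigr\vert \le \Vert\mu\Vert\cdot\sup_{p\in\supp\mu}\vert (Rx)(p) - c\vert.\]
Second, I would choose $c$ to be the midpoint of the range of $Rx$ on $\supp\mu$, namely $c = \tfrac12\bigl(\sup_{p\in\supp\mu}(Rx)(p) + \inf_{p\in\supp\mu}(Rx)(p)\bigr)$; with this choice $\sup_{p\in\supp\mu}\vert (Rx)(p)-c\vert = \tfrac12\bigl(\sup_{\supp\mu}(Rx) - \inf_{\supp\mu}(Rx)\bigr)$, which is exactly $\tfrac12$ times the oscillation of $Rx$ over $\supp\mu$, and by the displayed inequality above this is at most $\tfrac12\diam\bigl(\phi^R[\supp\mu]\bigr)$. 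Combining the two displays gives $\bigl\vert\int_L(Rx)\,\dd\mu\bigr\vert \le \tfrac12\diam\bigl(\phi^R[\supp\mu]\bigr)\Vert\mu\Vert$ for every $x\in B_X$, and taking the supremum over $x$ yields the claimed bound on $\Vert R^*(\mu)\Vert$.

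There is essentially no hard step here; the only points requiring a line of care are the measure-theoretic identity $\int_{\supp\mu} g\,\dd\mu = \int_L g\,\dd\mu$ for bounded Borel $g$ (immediate since $\vert\mu\vert(L\setminus\supp\mu)=0$ by regularity) and the fact that the supremum and infimum of the continuous function $Rx$ over the compact set $\supp\mu$ are attained (so the midpoint $c$ is well-defined and finite); if $\supp\mu=\emptyset$ then $\mu=0$ and the inequality is trivial. The factor $\tfrac12$ comes precisely from centering the integrand at the midpoint of its range, and the passage from oscillation to $\diam(\phi^R[\supp\mu])$ is the identity $(Rx)(p)=\langle\phi^R(p),x\rangle$ together with $\Vert x\Vert\le1$.
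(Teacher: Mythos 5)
Your proof is correct, and it takes a genuinely different route from the paper's. The paper works on the measure side: after reducing to $\supp\mu=L$, it takes the Jordan decomposition $\mu=\mu^+-\mu^-$, notes that $\mu(L)=0$ forces $\mu^+$ and $\mu^-$ to be (after normalization) probability measures, hence elements of the weak*-closed convex hull of $\{\delta_p:p\in L\}$, and then pushes this forward through the weak*-continuous map $R^*$ to conclude that $R^*(\mu^+)$ and $R^*(\mu^-)$ both lie in the weak*-closed convex hull of $\phi^R[L]$, whose diameter equals $\diam\big(\phi^R[L]\big)$; the factor $\tfrac12$ comes from each of $\mu^\pm$ carrying half of $\Vert\mu\Vert$. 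You instead work on the function side, testing against $x\in B_X$, using $\mu(L)=0$ to subtract the midpoint $c$ of the range of $Rx$ on $\supp\mu$, and extracting the factor $\tfrac12$ from the identity $\sup_{\supp\mu}\vert Rx-c\vert=\tfrac12\operatorname{osc}_{\supp\mu}(Rx)\le\tfrac12\diam\big(\phi^R[\supp\mu]\big)$. The two arguments are dual to one another and of comparable length, but yours is more elementary: it avoids invoking that probability measures lie in the weak*-closed convex hull of the Dirac measures and that the diameter of a set equals that of its (weak*-closed) convex hull, at the cost of the small measure-theoretic checks you correctly flag ($\vert\mu\vert(L\setminus\supp\mu)=0$ by regularity, and the trivial case $\supp\mu=\emptyset$).
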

\begin{proof}
We can assume without loss of generality that $\supp\mu=L$. Namely, setting $H=\supp\mu$, $\mu'=\mu\vert_H$, and $R'=\rho_H\circ R$, we have that $\phi^{R'}=\phi^R\vert_H$, $(R')^*(\mu')=R^*(\mu)$, and $\Vert\mu\Vert=\Vert\mu'\Vert$, where $\rho_H:C(L)\to C(H)$ denotes the restriction operator.

Let $\mu=\mu^+-\mu^-$ be the Jordan decomposition of the measure $\mu$. Then $\Vert\mu\Vert=\mu^+(L)+\mu^-(L)$ and $\mu^+(L)=\mu^-(L)$, so that
$\mu^+$ and $\mu^-$ are regular nonnegative measures of norm $\frac12\Vert\mu\Vert$. We can obviously assume that $\Vert\mu\Vert=2$, which implies that $\mu^+$ and $\mu^-$ are regular probability measures and therefore they belong to the weak*-closed convex hull of $\big\{\delta_p:p\in L\big\}$. Since $R^*$
is weak*-continuous and linear, $R^*(\mu^+)$ and $R^*(\mu^-)$ belong to the weak*-closed convex hull of $\phi^R[L]$. Noting that
the diameter of a subset of $X^*$ is equal to the diameter of its weak*-closed convex hull, we obtain:
\[\Vert R^*(\mu)\Vert=\Vert R^*(\mu^+)-R^*(\mu^-)\Vert\le\diam\big(\phi^R[L]\big),\]
concluding the proof.
\end{proof}

\begin{lem}\label{thm:skeleton}
Let $I$ be a compact line and $H$ be a nonempty closed subset of $I$. Given $\mu\in M(I)$, there exists $\nu\in M(I)$ such that:
\begin{itemize}
\item[(A)] $\supp\nu\subset H$;
\item[(B)] $\nu(I)=\mu(I)$;
\item[(C)] $\Vert\nu\Vert\le\Vert\mu\Vert$;
\item[(D)] $\nu\big([\min I,t]\big)=\mu\big([\min I,t]\big)$, for all $t\in H\setminus\{\max H\}$.
\end{itemize}
The assumption $H\ne\emptyset$ can be dropped if $\mu(I)=0$.
\end{lem}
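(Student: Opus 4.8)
The plan is to work through the isometry $M(I)\cong\NBV(I)$ of Lemma~\ref{thm:MKNBVK}. Write $m=\min I$, $M=\max I$, and $F=F_\mu\in\NBV(I)$. If $H=\emptyset$ then by hypothesis $\mu(I)=0$ and $\nu=0$ works; so assume $H\ne\emptyset$ and set $a_0=\min H$, $b_0=\max H$. For $t\ge a_0$ the set $H\cap[m,t]$ is nonempty and closed, hence has a maximum, which I denote $g(t)$; the map $g$ is non-decreasing and satisfies $g(t)=t$ for $t\in H$. I would then define $G\colon I\to\R$ by $G(t)=0$ for $t<a_0$, $G(t)=F(g(t))$ for $a_0\le t<b_0$, and $G(t)=\mu(I)$ for $t\ge b_0$, and take $\nu\in M(I)$ to be the measure with $F_\nu=G$. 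Morally, $\nu$ is $\mu$ with the mass sitting in each component of $I\setminus H$ (and the mass above $b_0$) pushed to the nearest point of $H$ to the right of it; but I would verify the four properties directly from the formula for $G$, which keeps $\nu$ automatically regular.

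The first step is to check $G\in\NBV(I)$. Right-continuity follows from right-continuity of $F$ together with the fact that if $t_\lambda\downarrow t$ then the non-increasing net $g(t_\lambda)$ in the closed set $H$ converges to an element of $H\cap[m,t]$ that is $\ge g(t)$, hence to $g(t)$; the behaviour near $a_0$ and $b_0$, and at points with an immediate successor (where right-continuity is automatic), is routine. The crux is the inequality $\Vert G\Vert_{\BV}\le\Vert F\Vert_{\BV}$, which gives (C) via the isometry. Given any partition of $I$, refine it to contain $a_0$ and $b_0$; then (writing $V(F;[p,q])$ for the variation of $F$ over $[p,q]$) the part of the resulting Riemann--Stieltjes sum lying in $[b_0,M]$ is $0$ since $G$ is constant there, the part lying in $[a_0,b_0]$ is a sum $\sum_i|F(u_{i+1})-F(u_i)|$ along a non-decreasing sequence $u_i$ in $[a_0,M]$ (namely the $g(t_i)$ followed by $M$), hence at most $V(F;[a_0,M])$, and the part lying in $[m,a_0]$ contributes $|F(a_0)|\le|F(m)|+V(F;[m,a_0])$ (with an obvious modification when $H$ is a singleton); since $V(F)=V(F;[m,a_0])+V(F;[a_0,b_0])+V(F;[b_0,M])$, summing these gives $\Vert G\Vert_{\BV}\le\Vert F\Vert_{\BV}$.

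With $\nu$ in hand, (B) is immediate from $\nu(I)=F_\nu(M)=G(M)=\mu(I)$, and (D) holds because for $t\in H$ with $t\ne b_0$ one has $g(t)=t$, whence $\nu([m,t])=G(t)=F(t)=\mu([m,t])$. For (A), take $t\notin H$ and choose a basic open set $V$ --- of type $(a,b)$, $[m,b)$, or $(a,M]$ --- with $t\in V\subseteq I\setminus H$. Since $H\cap(a,s]=\emptyset$ for $s\in V$ forces $g(s)=g(a)$, the function $G$ is constant on $V$; consequently $\nu((s_1,s_2])=G(s_2)-G(s_1)=0$ for $s_1\le s_2$ in $V$, so the restriction of $\nu$ to the corresponding closed interval has distribution function constant away from its right endpoint, hence is supported on its endpoints, giving $|\nu|(V)=0$. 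Thus $t\notin\supp\nu$; as $t$ was an arbitrary point outside $H$, $\supp\nu\subseteq H$.

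I expect the main obstacle to be the estimate $\Vert G\Vert_{\BV}\le\Vert F\Vert_{\BV}$: it requires splitting the Riemann--Stieltjes sums across the three regions $[m,a_0]$, $[a_0,b_0]$, $[b_0,M]$ and controlling each by the matching piece of $\Vert F\Vert_{\BV}$, using monotonicity of $g$ to recognize the middle contribution as a variation along a monotone sequence. A secondary technical point --- as in the proof of Lemma~\ref{thm:MKNBVK} --- is the careful treatment of consecutive points of $I$ throughout the right-continuity and support arguments.
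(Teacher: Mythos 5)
Your construction is correct and produces the same measure as the paper, but by a genuinely different route. The paper works directly on the measure side: writing $S$ for the set of left-isolated points of $H$ and $I_t$ for the gap of $I\setminus H$ immediately to the left of $t\in S$, it sets
\[\nu=\mu_H+\sum_{t\in S}\mu(I_t)\delta_t+\mu\big(\left]\max H,\max I\right]\big)\delta_{\max H},\qquad \mu_H(B)=\mu(H\cap B),\]
after which (A) is immediate, (C) follows from $\sum_{t\in S}\vert\mu(I_t)\vert\le\vert\mu\vert(I\setminus H)$, and the only technical point is the one the paper flags: by regularity only countably many gaps carry $\vert\mu\vert$-mass, so the sum converges in norm. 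You instead prescribe the distribution function $G=F_\nu$ and pull $\nu$ back through the isometry of Lemma~\ref{thm:MKNBVK}. What this buys is that (B) and (D) become tautological; what it costs is the right-continuity check for $F\circ g$ and the estimate $\Vert G\Vert_{\BV}\le\Vert F\Vert_{\BV}$, both of which you handle correctly (the re-indexing of the middle block as a variation sum of $F$ along the monotone points $g(t_i)$ followed by $\max I$ is exactly the right device, and it correctly absorbs the $[b_0,\max I]$ piece of $V(F)$ that $G$ no longer sees). The one spot where your write-up is thinner than it should be is (A): from the local constancy of $G$ you still must extract $\vert\nu\vert(V)=0$, not merely $\nu\big(\left]s_1,s_2\right]\big)=0$. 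The clean fix is already implicit in your argument: your computation $g(s)=g(a)$ shows $G$ is constant on $\{a\}\cup V$ (and this persists across the three clauses defining $G$, since $a_0,b_0\in H$ cannot lie in $V$), so $\nu\vert_{\left]a,s\right]}$ has identically vanishing distribution function for every $s\in V$, hence is the zero measure by the isometry, and inner regularity of $\vert\nu\vert$ then gives $\vert\nu\vert(V)=0$. I would count this as a detail to be filled in rather than a gap; overall your proof is sound, at the price of somewhat heavier bookkeeping than the paper's two-line formula.
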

\begin{proof}
Denote by $S$ the set of points of $H$ that are left-isolated in $H$. For each $t\in S$, set $I_t=\left[0,t\right[$, if $t=\min H$,
and if $t\ne\min H$, set $I_t=\left]t^-,t\right[$, where $t^-\in H$ denotes the predecessor of $t$ in $H$. Note that $I$ is equal to the disjoint union
of $H$, the intervals $I_t$, $t\in S$, and the interval $\left]\max H,\max I\right]$. The measure $\nu$ is defined by:
\[\nu=\mu_H+\sum_{t\in S}\mu(I_t)\delta_t+\mu\big(\left]\max H,\max I\right]\!\big)\delta_{\max H},\]
where $\mu_H\in M(I)$ is given by $\mu_H(B)=\mu(H\cap B)$, for every Borel subset $B$ of $I$. Conditions (A)---(D) are easily checked keeping in mind that,
since $\mu$ is regular and each $I_t$ is open, there exists a countable subset $S_0$ of $S$ such that $\vert\mu\vert\big(\bigcup_{t\in S\setminus S_0}I_t\big)=0$.
\end{proof}

To state the next lemma and prove Lemma~\ref{thm:hardmeasures}, it is convenient to introduce the following terminology.
By a {\em clopen-partition\/} of a compact line $L$ we mean a finite set $P$ of right-isolated points of $L$ such that $\max L$ is in $P$. If $P=\{b_1,\ldots,b_m\}$, with $b_1<\cdots<b_m=\max L$, we write $\overline P=\big\{[0,b_1],\left]b_1,b_2\right],\ldots,\left]b_{m-1},b_m\right]\!\big\}$,
so that $\overline P$ is a partition of $L$ into clopen intervals. For $t\in L$, we denote by $\overline P(t)$ the unique $I\in\overline P$ such that
$t\in I$.
\begin{lem}\label{thm:lematildemu}
Let $L$ be a compact line and $P$ be a clopen-partition of $L$. Given $\mu\in M(L)$, there exists $\tilde\mu\in M(L)$ satisfying the following conditions:
\begin{itemize}
\item[(i)] $\tilde\mu(I)=0$, for all $I\in\overline P$;
\item[(ii)] $\tilde\mu\big([0,t]\big)=\mu\big([0,t]\big)$, for all $t\in L\setminus P$;
\item[(iii)] $\Vert\tilde\mu\Vert\le\Vert\mu\Vert+2\sum_{b\in P}\big\vert\mu\big([0,b]\big)\big\vert$.
\end{itemize}
\end{lem}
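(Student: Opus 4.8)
The plan is to build $\tilde\mu$ explicitly, one piece for each interval of $\overline P$. Write $\overline P=\{I_1,\ldots,I_m\}$, so $I_1=[0,b_1]$ and $I_k=\left]b_{k-1},b_k\right]$ for $2\le k\le m$; since every $b_k$ is right-isolated, each $I_k$ is a clopen interval of $L$, and the $I_k$ partition $L$. On each $I_k$ I will put a measure $\tilde\mu_k$ concentrated on $I_k$ that keeps the value $\mu\big([0,t]\big)$ at every interior point $t$ of $I_k$ but has total mass $0$ on $I_k$; setting $\tilde\mu:=\sum_{k=1}^m\tilde\mu_k$ then gives (i) for free, while (ii) and (iii) follow from a short computation.

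Concretely, I take, for $2\le k\le m$,
\[\tilde\mu_k=\mu\vert_{I_k}+\mu\big([0,b_{k-1}]\big)\,\delta_{s_k}-\mu\big([0,b_k]\big)\,\delta_{b_k},\]
where $s_k$ is the successor of $b_{k-1}$ in $L$ --- it exists because $b_{k-1}$ is right-isolated and $b_{k-1}\ne\max L$, and it satisfies $b_{k-1}<s_k\le b_k$ --- and $\tilde\mu_1=\mu\vert_{I_1}-\mu\big([0,b_1]\big)\,\delta_{b_1}$. Since $I_k$ is clopen, $\mu\vert_{I_k}$ belongs to $M(L)$, so each $\tilde\mu_k$ is a well-defined element of $M(L)$ supported in $I_k$, and hence so is $\tilde\mu$.

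The verifications are routine. For (i): as $\tilde\mu_j$ is supported off $I_k$ for $j\ne k$, we get $\tilde\mu(I_k)=\tilde\mu_k(I_k)$, which equals $0$ by a one-line computation using $\mu(I_k)=\mu\big([0,b_k]\big)-\mu\big([0,b_{k-1}]\big)$. For (ii): given $t\in L\setminus P$, say $t\in I_k$ with $b_{k-1}<t<b_k$, one checks that $\tilde\mu_j\big([0,t]\big)=0$ when $j\ne k$ (for $j<k$ because $I_j\subseteq[0,t]$ and $\tilde\mu_j(I_j)=0$, and for $j>k$ because $I_j\cap[0,t]=\emptyset$), while $\tilde\mu_k\big([0,t]\big)=\mu\big(\left]b_{k-1},t\right]\big)+\mu\big([0,b_{k-1}]\big)=\mu\big([0,t]\big)$, since $b_{k-1}<s_k\le t<b_k$ means that the atom at $s_k$ is counted and the atom at $b_k$ is not. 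For (iii): the $\tilde\mu_k$ have pairwise disjoint supports, so $\Vert\tilde\mu\Vert=\sum_{k=1}^m\Vert\tilde\mu_k\Vert$; the triangle inequality gives $\Vert\tilde\mu_k\Vert\le\vert\mu\vert(I_k)+\vert\mu\big([0,b_{k-1}]\big)\vert+\vert\mu\big([0,b_k]\big)\vert$ for $k\ge2$ and $\Vert\tilde\mu_1\Vert\le\vert\mu\vert(I_1)+\vert\mu\big([0,b_1]\big)\vert$; summing and using $\sum_{k=1}^m\vert\mu\vert(I_k)=\Vert\mu\Vert$, each quantity $\vert\mu\big([0,b]\big)\vert$, $b\in P$, enters at most twice, which yields the bound in (iii).

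I do not expect a genuine obstacle here --- this is essentially a bookkeeping lemma. The only point where a hypothesis is really used is the existence of the correction point $s_k$ inside $\left]b_{k-1},b_k\right]$, which is exactly why the definition of a clopen-partition requires its points to be right-isolated (this is also what makes $\mu\vert_{I_k}$ a legitimate measure). An equivalent alternative is to define $\tilde\mu$ through the isometry $M(L)\cong\NBV(L)$ of Lemma~\ref{thm:MKNBVK}, letting $\tilde\mu$ correspond to the function equal to $F_\mu$ off $P$ and equal to $0$ on $P$: this function is right-continuous precisely because the points of $P$ are right-isolated, and (iii) then comes from estimating the increase in total variation caused by altering $F_\mu$ at the finitely many points of $P$.
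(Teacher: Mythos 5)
Your construction is identical to the paper's: summing your $\tilde\mu_k$ over $k$ gives exactly $\tilde\mu=\mu-\sum_{b\in P}\mu\big([0,b]\big)\delta_b+\sum_{b\in P,\,b\ne\max L}\mu\big([0,b]\big)\delta_{b^+}$, which is the formula the paper writes down in one line, and your verifications of (i)--(iii) are correct. So the proposal is right and takes essentially the same approach, merely organized interval by interval.
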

\begin{proof}
Simply define $\tilde\mu$ by setting:
\[\tilde\mu=\mu-\sum_{b\in P}\mu\big([0,b]\big)\delta_b+\!\!\!\!\!\sum_{\substack{b\in P\\b\ne\max L}}\!\!\!\!\mu\big([0,b]\big)\delta_{b^+},\]
where $b^+$ denotes the successor of $b$ in $L$.
\end{proof}

\begin{proof}[Proof of Lemma~\ref{thm:hardmeasures}]
Consider the decreasing family of closed subsets $H_\alpha$ of $L$ defined as in Lemma~\ref{thm:Halphas}, with $\delta=\frac{2\varepsilon}{1+\varepsilon'}$.
For each clopen interval $I\subset L$, set:
\begin{equation}\label{eq:defalphaI}
\alpha(I)=\min\big\{\alpha:\diam\big(\phi^R[H_\alpha\cap I]\big)<\delta\big\}.
\end{equation}
The ordinal $\alpha(I)$ is well-defined since there exists $\alpha$ with $H_\alpha=\emptyset$ and, for such $\alpha$, we have $\diam\big(\phi^R[H_\alpha\cap I]\big)<\delta$.

Since $L$ is second countable, it has only a countable number of right-isolated points. Let $(P_k)_{k\ge1}$ be an increasing sequence of clopen-partitions of $L$ such that $\bigcup_{k=1}^\infty P_k$ is equal to the set of all right-isolated points of $L$. It follows from the fact that $L$ is zero-dimensional
that, for each $t\in L$, the set $\big\{\overline{P_k}(t):k\ge1\big\}$ is a fundamental system of neighborhoods of $t$. Since $(\mu_n)_{n\ge1}$ is weak*-null,
for each $k\ge1$ there exists $n_k\ge1$ such that:
\begin{equation}\label{eq:epsilonlinha}
2\sum_{b\in P_k}\big\vert\mu_n\big([0,b]\big)\big\vert\le\varepsilon',
\end{equation}
for all $n\ge n_k$. The sequence $(n_k)_{k\ge1}$ can be chosen to be strictly increasing.

Now let $n\ge1$ be given and let us define $\nu_n$ and $\mu'_n$. If $n<n_1$, set $\nu_n=0$ and $\mu'_n=\mu_n$, so that (a)---(c) hold.
If $n\ge n_1$, pick the only $k\ge1$ such that $n_k\le n<n_{k+1}$. Using Lemma~\ref{thm:lematildemu} with $\mu=\mu_n$ and $P=P_k$, we get $\tilde\mu_n\in M(L)$ satisfying (i)---(iii). For each $I\in\overline{P_k}$, we have $\tilde\mu_n(I)=0$; apply Lemma~\ref{thm:skeleton} to the compact line $I$, the closed subset $H_{\alpha(I)}\cap I$ of $I$, and the measure $\tilde\mu_n\vert_I$ obtaining $\nu_n^I\in M(I)$ satisfying (A)---(D). Define $\nu_n\in M(L)$ so that $\nu_n\vert_I=\nu_n^I$, for all $I\in\overline{P_k}$, and set $\mu'_n=\mu_n-\nu_n$. Then (a) holds. The following simple computation proves (b):
\begin{align*}
\Vert\nu_n\Vert&=\sum_{I\in\overline{P_k}}\Vert\nu_n^I\Vert\stackrel{\text{(C)}}\le\sum_{I\in\overline{P_k}}\Vert\tilde\mu_n\vert_I\Vert
=\Vert\tilde\mu_n\Vert\\[4pt]
&\stackrel{\text{(iii)}}\le\Vert\mu_n\Vert+2\sum_{b\in P_k}\big\vert\mu_n\big([0,b]\big)\big\vert\stackrel{\eqref{eq:epsilonlinha}}\le1+\varepsilon'.
\end{align*}
This yields also $\Vert\mu'_n\Vert\le2+\varepsilon'$. To prove (c), note first that:
\[\nu_n=\sum_{I\in\overline{P_k}}\nu_n^I,\]
where $\nu_n^I\in M(I)$ is identified with its extension to $L$ that vanishes identically outside $I$.
By (A), we have $\supp\nu_n^I\subset H_{\alpha(I)}\cap I$; moreover:
\begin{equation}\label{eq:nunIzero}
\nu_n^I(I)\stackrel{\text{(B)}}=\tilde\mu_n(I)\stackrel{\text{(i)}}=0.
\end{equation}
Then, using Lemma~\ref{thm:flower}, we obtain:
\begin{align*}
\Vert R^*(\nu_n)\Vert\le\sum_{I\in\overline{P_k}}\Vert R^*(\nu_n^I)\Vert&\le\frac12\sum_{I\in\overline{P_k}}\diam\big(\phi^R[H_{\alpha(I)}\cap I]\big)\Vert\nu_n^I\Vert\\
&\stackrel{\eqref{eq:defalphaI}}\le\frac12\,\delta\,\Vert\nu_n\Vert\le\varepsilon.
\end{align*}

Now let us prove that $(\nu_n)_{n\ge1}$ (and hence $(\mu'_n)_{n\ge1}$) is weak*-null. Since $L$ is zero-dimensional and $(\nu_n)_{n\ge1}$ is bounded,
it suffices to fix a right-isolated $t$ in $L$ and check that $\nu_n\big([0,t]\big)\longrightarrow0$. Take $k_0\ge1$ with $t\in P_{k_0}$ and let us show
that $\nu_n\big([0,t]\big)=0$ for $n\ge n_{k_0}$. If $n\ge n_{k_0}$, we have $n_k\le n<n_{k+1}$ for some $k\ge k_0$. Then $t\in P_k$ and $[0,t]$
is a disjoint union of elements of $\overline{P_k}$. Using \eqref{eq:nunIzero}, we obtain $\nu_n\big([0,t]\big)=0$.

Finally, let us prove (d). Set:
\[E=\big\{\!\max\!\big(H_{\alpha(I)}\cap I\big):\text{$I\in\overline{P_k}$, $k\ge1$ and $H_{\alpha(I)}\cap I\ne\emptyset$}\big\},\]
so that $E$ is a countable subset of $L$. Let $t\in L\setminus E$ be fixed. We claim that $\mu'_n\big([0,t]\big)\longrightarrow0$.
Let $\beta$ be the largest ordinal with $t\in H_\beta$. Then $t\not\in H_{\beta+1}$ and thus there exists a neighborhood $V$
of $t$ in $H_\beta$ with $\diam\big(\phi^R[V]\big)<\delta$. Since $\big\{\overline{P_k}(t):k\ge1\big\}$ is a fundamental system
of neighborhoods of $t$ in $L$, there exists $k_0\ge1$ with $H_\beta\cap\overline{P_{k_0}}(t)\subset V$. The claim will be proven once
we have established that $\mu'_n\big([0,t]\big)=0$ for $n\ge n_{k_0}$. Given $n\ge n_{k_0}$, there exists $k\ge k_0$ with
$n_k\le n<n_{k+1}$. Setting $I_t=\overline{P_k}(t)$, we have $I_t\subset\overline{P_{k_0}}(t)$, so that $H_\beta\cap I_t\subset V$
and $\diam\big(\phi^R[H_\beta\cap I_t]\big)<\delta$. It follows that $\alpha(I_t)\le\beta$ and thus $t\in H_{\alpha(I_t)}\cap I_t$.
Since $t\not\in E$, we have that $t\ne\max\!\big(H_{\alpha(I_t)}\cap I_t\big)$; in particular $t\ne\max I_t$ and thus $t\not\in P_k$.
We compute:
\[\nu_n\big([0,t]\big)\stackrel{\eqref{eq:nunIzero}}=\nu_n^{I_t}\big([\min I_t,t]\big)\stackrel{\text{(D)}}=
\tilde\mu_n\big([\min I_t,t]\big)\stackrel{\text{(i)}}=\tilde\mu_n\big([0,t]\big)\stackrel{\text{(ii)}}=\mu_n\big([0,t]\big).\]
Hence $\mu'_n\big([0,t]\big)=0$. This proves the claim and concludes the proof.
\end{proof}

\end{section}

\end{document}